\title[Congruences of the $q$-Fibonacci sequence]{Congruences of the $q$-Fibonacci sequence related with its transcendence}
\author{Takumi Anzawa\and
Hidetaka Funakura}
\email{m20001s@math.nagoya-u.ac.jp , hidetaka.funakura@gmail.com}
\address{graduate school of mathematics, nagoya university, furo-cho, chikusa-ku, nagoya, 464-8602, japan}
\date{\today} 
\theoremstyle{definition} \newtheorem{df}{Definition}[section]}
\theoremstyle{definition}\newtheorem{ex}[df]{Example}}
\theoremstyle{definition}\newtheorem{prop}[df]{Proposition}}
\theoremstyle{definition}\newtheorem{lem}[df]{Lemma}}
\theoremstyle{definition}}
\theoremstyle{definition} \newtheorem{rmk}[df]{Remark}}
\newtheorem{thm}[df]{Theorem}
{\theoremstyle{definition}\newtheorem{conj}[df]{Conjecture}}
\DeclareMathOperator{\id}{id}
\DeclareMathOperator{\Li}{Li}
\DeclareMathOperator{\Gal}{Gal}
\DeclareMathOperator{\ord}{ord}
\newcommand{\ca}[1]{\mathscr{#1}}
\newcommand{\qbinom}[2]{\left[ #1 \atop #2 \right]}
\newcommand{\A}{\mathscr{A}}
\newcommand{\fz}{\zeta_{\A}}
\newcommand{\engMonth}{
	\ifcase\month
		\or January 
		\or February 
		\or March 
		\or April
		\or May 
		\or June
		\or July 
		\or August 
		\or September
		\or October 
		\or November 
		\or December\fi
} 
\newcommand{\fullday}{
	\engMonth \, \the\day, \the\year	
} 
\newcommand{\Z}{\mathbb{Z}}
\newcommand{\Q}{\mathbb{Q}}
\newcommand{\Qi}{\mathcal{C}_{\A}^0}
\newcommand{\resi}[2]{\left(\frac{#1}{#2}\right)}
\newcommand{\Prime}{P}
\newcommand{\floor}[1]{\lfloor #1 \rfloor}
\newcommand{\lsym}[1]{\left(\frac{#1}{5}\right)}
\numberwithin{equation}{section}
\begin{document}
\maketitle
\begin{abstract}
By using Andrews' explicit formulae of the $q$-Fibonacci sequence introduced by Schur, we prove certain congruences of the $q$-Fibonacci sequence which relate the sequence with the original Fibonacci sequence.
As a corollary, we show that it yields a transcendental element in the $\mathbb{Q}$-algebra $\A$ of integers modulo infinitely large primes under the generalized Riemann hypothesis.
\end{abstract}
\tableofcontents


\section{Introduction}
In 1917, Schur (\cite{Schur}) introduced the so-called the $q$-\textit{Fibonacci sequence} $\{F_n(q)\}$ which is the sequence of $\mathbb{Q}[q]$ defined by the initial value $(F_0(q),F_1(q))=(0,1)$ and the recurrence relation
\[
F_{n+2}(q)-F_{n+1}(q)-q^nF_n(q)=0
\]
for every $n\in\mathbb{N}$. It recovers the ordinary Fibonacci sequence $\{F_n\}$ when $q=1$.
Andrews (\cite{andrews}) gave an explicit formula (cf. Theorem \ref{q-Fibonacci}) of the $q$-Fibonacci sequence to prove some kind of the Rogers-Ramanujan identities.

Let $P$ be the set of prime numbers and
let $v_p(\alpha)$ denote the $p$-adic valuation of $\alpha$ for $\alpha\in\mathbb{Q}^\times$ and $p\in P$ and set $v_p(0)=0$.
For a pair $(\alpha,p)\in\mathbb{Q}^\times\times P$ with $v_p(\alpha)=0$,
$\ord_{p}(\alpha)$ denotes the order of $\alpha$ in the multiplicative group $(\mathbb{Z}/p\mathbb{Z})^\times$ and $I_p(\alpha):=(p-1)/\ord_{p}(\alpha)$, i.e. $I_p(\alpha)$ is the index of the subgroup of $(\mathbb{Z}/p\mathbb{Z})^\times$ generated by $\alpha$.
So when $\alpha$ is a primitive root, we have $\ord_{p}(\alpha)=p-1$ and $I_p(\alpha)=1$.
The values $\ord_p(\alpha)$ and $I_p(\alpha)$ are called the \textit{residual order} of $\alpha$ and the \textit{residual index} of $\alpha$ respectively.
Our main theorem is on congruence which relates the $q$-Fibonacci sequence with the ordinary one:

\begin{thm}\label{thm-q-fib}
For $\alpha\in \mathbb{Q}^\times$ and $p\in P$ satisfying $v_p(\alpha)=v_p(\alpha-1)=0$ and $\ord_{p}(\alpha)\not\equiv 0 \mod 5$,
\[
F_{p}(\alpha)\equiv F_{I_p(\alpha)+\lsym{\ord_p(\alpha)}} \mod p
\]
holds, where $\lsym{m}$ is the Legendre symbol for any $m\in\mathbb{Z}$.
\end{thm}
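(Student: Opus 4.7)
Write $d := \ord_p(\alpha)$ and $I := I_p(\alpha)$, so that $p-1 = Id$. The strategy is to start from Andrews' explicit formula (Theorem \ref{q-Fibonacci})
\[
F_p(q) = \sum_{j \geq 0} q^{j^2} \qbinom{p - 1 - j}{j}_{q},
\]
specialise at $q = \alpha$, and reduce each $q$-binomial modulo $p$ by a $q$-analog of Lucas' theorem (due to D\'esarm\'enien). Since $\alpha$ has multiplicative order $d$ in $(\mathbb{Z}/p)^{\times}$ and $\gcd(d,p)=1$,
\[
\qbinom{m}{k}_{\alpha} \equiv \binom{\lfloor m/d \rfloor}{\lfloor k/d \rfloor} \qbinom{m \bmod d}{k \bmod d}_{\alpha} \pmod{p}
\]
for all $m,k \geq 0$.

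Writing $j = ad + b$ with $0 \leq b < d$ and noting $\alpha^{j^2} \equiv \alpha^{b^2} \pmod{p}$, a short case split on whether $b=0$ or $b\geq 1$ reduces $\qbinom{p-1-j}{j}_{\alpha}$ modulo $p$ to $\binom{I-a}{a}$ or $\binom{I-1-a}{a}\qbinom{d-b}{b}_{\alpha}$ respectively. Summing over $a$ and $b$, invoking the classical identities $F_{I+1} = \sum_{a\geq 0}\binom{I-a}{a}$ and $F_I = \sum_{a\geq 0}\binom{I-1-a}{a}$, and recognising $\sum_{b=1}^{d-1}\alpha^{b^2}\qbinom{d-b}{b}_{\alpha} = F_{d+1}(\alpha)-1$ by a second application of Andrews' formula, one arrives at the intermediate identity
\[
F_p(\alpha) \equiv F_{I+1} + F_I\bigl(F_{d+1}(\alpha) - 1\bigr) \pmod{p}.
\]

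Everything then reduces to the \emph{key lemma}: under the hypothesis $d \not\equiv 0 \pmod{5}$,
\[
F_{d+1}(\alpha) \equiv \frac{1 + \lsym{d}}{2} \pmod{p},
\]
namely $F_{d+1}(\alpha) \equiv 1$ when $d$ is a quadratic residue modulo $5$ and $F_{d+1}(\alpha) \equiv 0$ otherwise. Substituting into the intermediate identity gives $F_p(\alpha) \equiv F_{I+1}$ when $\lsym{d}=1$ and $F_p(\alpha) \equiv F_{I+1}-F_I = F_{I-1}$ when $\lsym{d}=-1$; in both cases the right-hand side is exactly $F_{I+\lsym{d}}$, as desired.

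The main obstacle is the key lemma. A natural route is Schur's finite Rogers-Ramanujan-type identity, which rewrites $F_{d+1}(q)$ as an alternating sum with pentagonal exponents of the form $k(5k\pm 1)/2$; at $q=\alpha$ with $\alpha^d=1$ and $\gcd(d,5)=1$, this collapses to a quadratic Gauss-type sum modulo $d$ whose value is governed by $\lsym{d}$. Equivalently, it amounts to the polynomial divisibilities $\Phi_d(q) \mid F_{d+1}(q) - 1$ when $\lsym{d}=1$ and $\Phi_d(q) \mid F_{d+1}(q)$ when $\lsym{d}=-1$, which one verifies from Schur's identity and then evaluates modulo $p$ at a root of $\Phi_d$. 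The condition $d \not\equiv 0 \pmod{5}$ is essential: when $5 \mid d$ the pentagonal quadratic form degenerates modulo $d$ and $F_{d+1}(\alpha)$ is no longer forced into $\{0,1\}$, so this approach breaks down.
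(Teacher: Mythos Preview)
Your approach is correct and genuinely different from the paper's. The paper works directly with the alternating pentagonal form of Andrews' identity (Theorem~\ref{q-Fibonacci}), reduces each $\qbinom{p-1}{k}_\alpha$ modulo $p$ (their Lemma~\ref{lem-qbinom}, a special case of $q$-Lucas with top row $p-1$), and then packages the resulting residue-class sums into an auxiliary two-parameter sequence $G_{n,m}$ which they show satisfies the Fibonacci recurrence in $n$ with the correct initial values depending on $\lsym{m}$. You instead start from the Schur--Carlitz formula $F_{n+1}(q)=\sum_{j}q^{j^2}\qbinom{n-j}{j}_q$ (note: this is \emph{not} the formula stated as Theorem~\ref{q-Fibonacci} in the paper, so your citation needs adjusting), apply the full D\'esarm\'enien $q$-Lucas theorem, and obtain the clean intermediate identity $F_p(\alpha)\equiv F_{I+1}+F_I\bigl(F_{d+1}(\alpha)-1\bigr)$, reducing everything to the evaluation of $F_{d+1}(\alpha)$. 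Your key lemma $F_{d+1}(\alpha)\equiv\tfrac{1+\lsym{d}}{2}$ is a tidy self-contained statement about cyclotomic divisibility of $q$-Fibonacci polynomials, and its proof via the pentagonal form is indeed short: at a primitive $d$-th root of unity only the terms with $\qbinom{d}{k}$ equal to $\qbinom{d}{0}$ or $\qbinom{d}{d}$ survive, forcing $d-5j\in\{0,1,2d,2d+1\}$, which has a solution exactly when $d\equiv\pm1\pmod 5$ and in those cases contributes $1$. What your route buys is a more conceptual factorisation of the problem (reduction to $F_{d+1}$ at a root of unity) and a reusable lemma; what the paper's route buys is that it stays within a single explicit formula and avoids invoking the general $q$-Lucas theorem, needing only the $m=p-1$ case which they prove from scratch. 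Both are complete, but you should supply the brief verification of the key lemma rather than leave it as a sketch, and correct the reference for the $q^{j^2}$ formula.
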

The following quotient $\mathbb{Q}$-algebra
\[
\mathscr{A}:=\left(\prod_{p\in P}\mathbb{Z}/p\mathbb{Z}\right)\Bigm/\left(\bigoplus_{p\in P} \mathbb{Z}/p\mathbb{Z}\right)
\]
appeared in \cite{Kon}, has been studied in several kinds of literature (\cite{Rosen1}, \cite{rtty},   \cite{Seki}, etc) in relation with the study of finite multiple zeta values (FMZVs, in short) introduced by Kaneko and Zagier (\cite{Kan1}). 
Rosen (\cite{Rosen1}) introduced the notion of finite algebraic numbers in $\A$ by using recurrent sequences.
It should be noted that solutions of $\mathbb{Q}$-polynomials in $\A$ are not always finite algebraic numbers in $\A$ (in Rosen's sense).
In this paper, we consider \textit{$\A$-transcendental numbers}, elements in $\A$ which are not roots of non-zero $\mathbb{Q}$-polynomials in $\A$ (Definition \ref{df-ftn}).
It is expected that non-zero FMZVs are $\A$-transcendental,
however, so far no single example has been obtained as far as authors know.
Our interests in this paper are to construct examples of $\A$-transcendental numbers in the algebra $\A$

We show that the $(F_p(\alpha))_p\in \A$ is an $\A$-transcendental number by combining Theorem \ref{thm-q-fib} and Moree's result (\cite{MoreeII}) on the density of certain primes related to the residual index.

\begin{thm}\label{thm-trans}
Under the generalized Riemann hypothesis (GRH, in short), $(F_{p}(g))_{p}\in \A$ is an $\A$-transcendental number when $g\in\mathbb{Z}_{>1}$ is square-free.
\end{thm}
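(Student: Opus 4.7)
The plan is to argue by contradiction, combining Theorem~\ref{thm-q-fib} with a density result of Moree. Suppose $(F_p(g))_p$ is $\mathscr{A}$-algebraic; then there is a non-zero polynomial $P(x)\in\mathbb{Q}[x]$ with $P(F_p(g))\equiv 0\pmod p$ for all but finitely many primes $p$. After excluding the finitely many primes dividing $g(g-1)$, together with those primes $p$ with $5\mid\ord_p(g)$ (to which Theorem~\ref{thm-q-fib} does not apply), the theorem rewrites the hypothesis as
\[
P(F_{n(p)})\equiv 0\pmod p,\qquad n(p):=I_p(g)+\lsym{\ord_p(g)}\in\{I_p(g)\pm 1\},
\]
for all but finitely many primes in the remaining (still infinite) set.

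Since the non-zero polynomial $P$ has only finitely many roots, only finitely many integers $n$ satisfy $P(F_n)=0$. I would then pick any $N\geq 2$ outside this finite exceptional set, so that $P(F_N)$ is a non-zero integer, and is therefore not divisible by all but finitely many primes. Consequently, if one can exhibit infinitely many admissible primes $p$ with $n(p)=N$, then for each such prime that is sufficiently large one obtains $P(F_p(g))\equiv P(F_N)\not\equiv 0\pmod p$, contradicting the assumption.

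The remaining and crucial step is to produce infinitely many primes $p$ with $n(p)=N$. Here I would invoke Moree's theorem under GRH~\cite{MoreeII}: for square-free $g>1$, every admissible pair consisting of a residual index $t\geq 1$ and a residue class $a\pmod m$ (with $\gcd(a,m)=1$) gives rise to a set of primes $p$ with $I_p(g)=t$ and $p\equiv a\pmod m$ of positive natural density. Specializing to $t=N-1$ and choosing $a\pmod{5(N-1)}$ so that $(p-1)/(N-1)\equiv\pm 1\pmod 5$ pins down $\lsym{\ord_p(g)}=+1$ and hence $n(p)=N$; if this particular residue class should be inadmissible for a sporadic value of $N$, the alternative $t=N+1$ with $(p-1)/(N+1)\equiv\pm 2\pmod 5$ (giving $\lsym{\ord_p(g)}=-1$) provides a backup.

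The main obstacle is this final step: one must verify that for infinitely many integers $N$, at least one of the two candidate congruence conditions on $p$ modulo $5(N\pm 1)$ is admissible in the Kummer-theoretic sense required by Moree's density formula, so that the density is genuinely positive rather than identically zero. This is where the square-freeness of $g$, which guarantees that $g$ is not a perfect power and simplifies the entanglement correction factors in Moree's formula, together with the free parameter $N$, are expected to yield the desired admissible family. Once this density assertion is in hand, the transcendence conclusion of Theorem~\ref{thm-trans} follows immediately from the chain of reductions above.
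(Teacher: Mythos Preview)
Your strategy is the paper's: reduce via Theorem~\ref{thm-q-fib} to showing that infinitely many distinct Fibonacci numbers each occur infinitely often in $(F_p(g))_p$, invoke Moree's density results under GRH, and conclude by the argument you spell out (which the paper packages as Proposition~\ref{dm}). You have also correctly isolated the one non-routine step.

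The point to be aware of is that Moree's theorem (Lemma~\ref{Lem-Moree-detail} in the paper) only supplies a \emph{formula} for the density $\delta(a,d;t)$; positivity is not part of the statement, and Lenstra showed the density can vanish. So your sentence ``every admissible pair \dots\ gives \dots\ positive natural density'' is not something you can quote---it is precisely what must be proved. The paper does not try to hit an arbitrary $N$ as you propose. Instead it restricts to $t$ prime with $t\equiv 1\pmod 5$ and to the single congruence $p\equiv 1+t\pmod{5t}$ (which forces $\ord_p(g)\equiv 1\pmod 5$, hence Legendre symbol $+1$), and for exactly these parameters proves $\delta(1,5;t)>0$ by hand (Proposition~\ref{prop-Moree}): the alternating sum over squarefree $n$ is grouped in consecutive pairs, and degree estimates coming from Hooley's formula $[K^g_{s,r}:\mathbb{Q}]=r\varphi(s)/\varepsilon_g(s)$ (Lemma~\ref{lem-dim}) bound each negative tail by a convergent series strictly below $1$, so every pair contributes nonnegatively and the leading term is strictly positive. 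The infinite supply of primes $t\equiv 1\pmod 5$ then yields infinitely many distinct values $F_{t\pm 1}$ occurring infinitely often, which feeds directly into Proposition~\ref{dm}. Your outline is therefore sound, but what you label ``the main obstacle'' is where essentially all of the work lies, and the paper's resolution is to choose a specific convenient family of $t$ and carry out the positivity estimate explicitly rather than to argue admissibility for general $N$.
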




\section{Congruences of $q$-Fibonacci sequence}
In this section, we prove our main theorem (Theorem \ref{thm-q-fib}).


\subsection{Review on $q$-analogues}
\verb| |

We recall the following standard notation:
\begin{itemize}
\item For $n\in\mathbb{Z}_{>0}$, the $q$-integer $[n]_q$ is defined by 
\[
\displaystyle [n]_q:=\frac{1-q^n}{1-q}.
\]
\item For $n\in\mathbb{Z}_{>0}$, the $q$-factorial $[n]_q!$ is defined by 
\[
\displaystyle [n]_q!:=[n]_q[n-1]_q\cdots [1]_q.
\]
\item For a pair of integers $n$, $m$, the $q$-binomial coefficient $\displaystyle \qbinom{n}{m}_q$ is defined as follows: 
\[
\displaystyle \qbinom{n}{m}_q:=
\begin{cases}
\frac{[n]_q\cdots [n-m+1]_q}{[m]_q\cdots [1]_q} &\text{if }0\le m\le n\\
0&\text{else}.
\end{cases}
\]
\end{itemize}
Andrews gave a general explicit formula of the $q$-Fibonacci sequence.
\begin{thm}[{\cite[Theorem]{andrews}}]\label{q-Fibonacci}
For any non-negative integer $n$,
\begin{align}
F_{n+1}(q)
&=\sum_{j=-\infty}^\infty (-1)^j q^{j(5j+1)/2} \qbinom{n}{\lfloor (n-5j)/2 \rfloor}_q
\end{align}
holds, 
where  $\lfloor x\rfloor$ denotes the greatest integer not exceeding $x$.
\end{thm}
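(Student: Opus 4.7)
The plan is to prove Andrews' identity by induction on $n$. Denote the right-hand side of the claimed formula by $G_n(q)$. Since the $q$-Fibonacci sequence is characterized by $F_0(q)=0$, $F_1(q)=1$ together with the recurrence $F_{n+2}(q)=F_{n+1}(q)+q^n F_n(q)$, it suffices to show that $G_n(q)$ agrees with $F_{n+1}(q)$, that is, $G_0(q)=G_1(q)=1$ and
\[
G_{n+1}(q)=G_n(q)+q^n G_{n-1}(q) \qquad (n\ge 1).
\]

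The initial conditions are immediate: the $q$-binomials $\qbinom{0}{k}_q$ and $\qbinom{1}{k}_q$ vanish unless $k$ lies in $\{0\}$ or $\{0,1\}$ respectively, so in both $G_0(q)$ and $G_1(q)$ only the $j=0$ summand survives, producing the constant $1$.

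For the recursive step I would start from
\[
G_{n+1}(q)=\sum_{j\in\mathbb{Z}}(-1)^j q^{j(5j+1)/2}\qbinom{n+1}{\lfloor (n+1-5j)/2\rfloor}_q
\]
and split into the parity cases $n=2m$ and $n=2m+1$ in order to resolve the floor $\lfloor(n+1-5j)/2\rfloor$. Then I would apply the two $q$-Pascal identities
\[
\qbinom{N}{K}_q=\qbinom{N-1}{K-1}_q+q^K\qbinom{N-1}{K}_q=q^{N-K}\qbinom{N-1}{K-1}_q+\qbinom{N-1}{K}_q
\]
to each term, choosing the form so that one of the two resulting $q$-binomials has the index that appears in $G_n(q)$ while the other will feed into $G_{n-1}(q)$. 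Since the floor behaves differently on even and odd $j$, the choice of Pascal identity depends on the parity of $j$, and the $G_{n-1}$-summands emerge only after reindexing $j\mapsto j\pm1$. The pentagonal-type exponent $j(5j+1)/2$ is what makes the reindexing compatible: its shift, combined with the $q^K$ factor coming from Pascal, accounts exactly for the $q^n$ on the right-hand side, while the signs $(-1)^j$ align.

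The principal obstacle is the combinatorial bookkeeping: one must verify that the sub-sums over even and odd $j$, after the appropriate Pascal expansions and reindexings, assemble into precisely $G_n(q)+q^n G_{n-1}(q)$, with all spurious terms cancelling through the alternating sign. A conceptually cleaner alternative—closer to Andrews' original viewpoint—is to identify $G_n(q)$ with a coefficient extracted from a Jacobi triple product and derive the recurrence from the product factorization, thereby embedding the identity into the Rogers--Ramanujan framework; this avoids the case-by-case work but imports machinery extrinsic to the present paper.
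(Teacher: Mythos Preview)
The paper does not prove this theorem at all: it is quoted verbatim from Andrews' article and used as a black box, with no argument supplied. There is therefore nothing in the paper's text to compare your proposal against.

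That said, your plan is sound and is in fact essentially the argument Andrews gives in the cited paper: one verifies the initial values and then shows by induction that the right-hand side satisfies the same recurrence as $F_{n+1}(q)$, using the two $q$-Pascal rules and a parity split on $j$ (or on $n$) to handle the floor. Your description of the mechanism---that the pentagonal exponent $j(5j+1)/2$ absorbs exactly the extra $q$-power produced by Pascal after the shift $j\mapsto j\pm 1$---is the crux, and it is correct. What you have written is a sketch rather than a proof: the ``combinatorial bookkeeping'' you flag is genuinely the entire content, and you have not carried it out. If you want a complete proof you must actually execute the parity cases and check that the cross terms cancel; nothing conceptual is missing, only the computation.
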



\subsection{Preliminaries}\verb| |

This subsection presents some lemmas of $q$-integers and $q$-binomial coefficients.
We show the prime congruence of the $q$-Fibonacci sequence by the lemmas.
Let $\mathbb{Z}_{(p)}$ be the localization of $\mathbb{Z}$ with respect to the prime ideal $(p)$ generated by a prime number $p$. 
Note that the isomorphism of field $\mathbb{Z}/p\mathbb{Z}\cong \mathbb{Z}_{(p)}/p \mathbb{Z}_{(p)}$ holds.
\begin{lem}
\label{lem-qint}
Let $\alpha \in \Q\setminus\{0,1\}$
and $p \in \Prime$ with $v_p(\alpha)=v_p(\alpha-1)=0$.
Let $l$ and $k\in \Z$ be integers satisfying $1\le l\le p-1$ and $k \equiv l \mod \ord_{p}(\alpha)$. Then $\displaystyle \frac{[k]_{\alpha}}{[l]_{\alpha}} \in\mathbb{Z}_{(p)}\setminus\{0\}$ and 
\begin{equation}
	\label{eq-qint} 
	\frac{[k]_{\alpha}}{[l]_{\alpha}} 
	\equiv
\left\{
	\begin{array}{lll}
		\frac{k}{l} &\mod p& \text{if } k \equiv 0 \mod \ord_{p}(\alpha) \\
		1 &\mod p& \text{if }k \not\equiv 0\mod \ord_{p}(\alpha) \\
	\end{array}
\right.
\end{equation}
holds.
\end{lem}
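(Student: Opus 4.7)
The plan is to rewrite $[k]_\alpha/[l]_\alpha$ as $(1-\alpha^k)/(1-\alpha^l)$ --- which is legitimate because the common factor $1/(1-\alpha)$ in the definition of the $q$-integers is a $p$-adic unit by the hypothesis $v_p(\alpha-1)=0$ --- and then to split the analysis according to whether $d:=\ord_p(\alpha)$ divides $k$, equivalently $l$ (using $k\equiv l \pmod d$). This case split matches exactly the two branches of the target congruence.

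In the case $k\not\equiv 0\pmod d$, both $1-\alpha^k$ and $1-\alpha^l$ are $p$-adic units. Combining the hypothesis $k\equiv l\pmod d$ with $\alpha^d\equiv 1\pmod p$ forces $\alpha^k\equiv\alpha^l\pmod p$, and hence $(1-\alpha^k)/(1-\alpha^l)\equiv 1\pmod p$. This delivers the second branch and simultaneously the membership in $\mathbb{Z}_{(p)}^{\times}\subset\mathbb{Z}_{(p)}\setminus\{0\}$.

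In the case $k\equiv 0\pmod d$, write $k=md$ and $l=nd$; since $1\leq l\leq p-1$, we get $1\leq n\leq (p-1)/d$, so $p\nmid n$. I would then apply the factorization $1-\alpha^{jd}=(1-\alpha^d)\sum_{i=0}^{j-1}\alpha^{id}$ for $j\geq 1$ (and reduce the case $j\leq 0$ to this by $1-\alpha^{jd}=-\alpha^{jd}(1-\alpha^{-jd})$ together with $\alpha^{jd}\equiv 1\pmod p$), and then cancel the common factor $1-\alpha^d$ to obtain
\[
\frac{[k]_\alpha}{[l]_\alpha}=\frac{\sum_{i=0}^{m-1}\alpha^{id}}{\sum_{i=0}^{n-1}\alpha^{id}}.
\]
Because $\alpha^d\equiv 1\pmod p$, the numerator and denominator reduce modulo $p$ to $m$ and $n$ respectively. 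Since $n$ is a unit mod $p$, the denominator is a $p$-adic unit, which establishes at once that the quotient lies in $\mathbb{Z}_{(p)}$ and that it equals $m/n=k/l$ modulo $p$, giving the first branch.

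I expect the main obstacle to be purely bookkeeping --- the sign/inverse adjustment for non-positive $m$ in the geometric factorization --- rather than anything conceptual. I note in passing that the case $p=2$ does not arise, since $v_2(\alpha)=v_2(\alpha-1)=0$ are mutually incompatible; this keeps the argument free from the characteristic-$2$ subtleties one would otherwise need to handle in a lifting-the-exponent style analysis.
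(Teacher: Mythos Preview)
Your proof is correct and follows essentially the same approach as the paper's: the same case split on whether $d=\ord_p(\alpha)$ divides $k$, the same reduction $\alpha^k\equiv\alpha^l\pmod p$ in the non-divisible case, and the same geometric factorization with cancellation of $1-\alpha^{d}$ in the divisible case. You are in fact slightly more careful than the paper in flagging the adjustment needed when $m=k/d$ is non-positive, a point the paper's proof passes over in silence.
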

\begin{rmk}
Note that for $n\in\mathbb{Z}$, $\alpha \in \Q\setminus\{0,1\}$ and $p$ be a prime number satisfying $v_p(\alpha)=v_p(\alpha-1)=0$, $[n]_{\alpha}\equiv 0 \mod p$ holds if and only if $n\equiv 0 \mod \ord_p(\alpha)$. We omit the proof.

\end{rmk}

\begin{proof}[Proof of Lemma \ref{lem-qint}]

Let $i$ be an integer satisfying $0\le i\le \ord_p(\alpha)-1$ and $k\equiv i \mod \ord_{p}(\alpha)$.
If $i\neq 0$, then $[l]_{\alpha}\not\equiv 0 \mod p$. We have
\begin{equation*}
	\frac{[k]_{\alpha}}{[l]_{\alpha}} 
	= 
	\frac{\alpha^{k - i}\alpha^i - 1}{\alpha^{l - i}\alpha^i - 1}
	\equiv 
	\frac{\alpha^i - 1}{\alpha^i - 1}
	=
	1 \mod p.
\end{equation*}
If $i=0$, we have
\begin{align*}
\left.\frac{[k]_{q}}{[l]_{q}}\right|_{q=\alpha}
&=\left.\frac{q^{k} - 1}{q^{l} - 1}\right|_{q=\alpha}=
\left.\frac{(1-q^{\ord_p(\alpha)})\sum_{s=0}^{k/\ord_p(\alpha)-1} q^{s\ord_p(\alpha)}}{(1-q^{\ord_p(\alpha)})\sum_{t=0}^{l/\ord_p(\alpha)-1} q^{t\ord_p(\alpha)}}\right|_{q=\alpha}
=\frac{\sum_{s=0}^{k/\ord_p(\alpha)-1} \alpha^{s\ord_p(\alpha)}}{\sum_{t=0}^{l/\ord_p(\alpha)-1} \alpha^{t\ord_p(\alpha)}}\\
&\equiv
\frac{k/\ord_p(\alpha)}{l/\ord_p(\alpha)}\mod p\\
&=\frac{k}{l} .
\end{align*}
\end{proof}
\begin{lem}\label{lem-p-1}
Let $\alpha\in\mathbb{Q}\setminus\{0,1\}$ and $p\in P$ satisfying $v_p(\alpha)=v_p(\alpha-1)=0$. Let $k$ be a positive integer with $0\le k\le p-1-\ord_p(\alpha)$. Put $\displaystyle C_k:=\frac{[p-k-1]_{\alpha}\cdots [p-k-\ord_p(\alpha)]_{\alpha}}{[k+\ord_p(\alpha)]_{\alpha}\cdots [k+1]_{\alpha}}$.
Then $C_k\in \mathbb{Z}_{(p)}\setminus\{0\}$ and 
\[
\qbinom{p-1}{k+\ord_{p}(\alpha)}_{\alpha}\equiv C_k\qbinom{p-1}{k}_{\alpha} \mod p
\]
holds.
Especially if there exists $l\in\mathbb{Z}$ such that $k=l\ord_p(\alpha)$, then we have
$\displaystyle C_{l\ord_p(\alpha)}\equiv\frac{I_p(\alpha)-l}{l+1} \mod p$.
\end{lem}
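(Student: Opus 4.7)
The plan is to read off the ratio $\qbinom{p-1}{k+d}_\alpha / \qbinom{p-1}{k}_\alpha$, with $d:=\ord_p(\alpha)$, directly from the definition of the $q$-binomial coefficient: almost all factors cancel and the quotient is exactly $C_k$, so
\[
\qbinom{p-1}{k+d}_\alpha=C_k\cdot\qbinom{p-1}{k}_\alpha
\]
as a rational identity. The statement then reduces to (i) verifying $C_k\in\mathbb{Z}_{(p)}\setminus\{0\}$ and (ii) computing $C_{ld}$ modulo $p$ when $k=ld$.

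For (i) the key observation is that $d\mid p-1$ forces $p\equiv 1\mod d$, so each of the two $d$-term arithmetic progressions
\[
p-k-1,\ p-k-2,\ \ldots,\ p-k-d\qquad\text{and}\qquad k+1,\ k+2,\ \ldots,\ k+d
\]
covers every residue class modulo $d$ exactly once. I would pick the unique permutation $\sigma$ of $\{1,\ldots,d\}$ with $p-k-i\equiv k+\sigma(i)\mod d$ and re-pair the factors of $C_k$ accordingly, writing
\[
C_k=\prod_{i=1}^{d}\frac{[p-k-i]_\alpha}{[k+\sigma(i)]_\alpha}.
\]
Since $0\le k\le p-1-d$ gives $1\le p-k-i,\,k+\sigma(i)\le p-1$, Lemma \ref{lem-qint} applies to every factor and yields that each lies in $\mathbb{Z}_{(p)}\setminus\{0\}$, hence so does $C_k$. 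Moreover, the factors whose common residue mod $d$ is nonzero are $\equiv 1\mod p$, while the unique pair whose common residue equals $0$ contributes $(p-k-i)/(k+\sigma(i))\mod p$.

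For (ii), when $k=ld$ the vanishing numerator index is the $i$ satisfying $p-ld-i\equiv 0\mod d$, i.e.\ $i=1$ (using $p\equiv 1\mod d$), giving $p-k-1=(I_p(\alpha)-l)d$; correspondingly $\sigma(1)=d$ and $k+\sigma(1)=(l+1)d$. The single non-trivial ratio is therefore
\[
\frac{(I_p(\alpha)-l)d}{(l+1)d}=\frac{I_p(\alpha)-l}{l+1},
\]
which yields $C_{ld}\equiv\frac{I_p(\alpha)-l}{l+1}\mod p$, as claimed. The only delicate point in the argument is bookkeeping the $0/0$ indeterminacy produced by pairing the two vanishing factors, but once one commits to the residue-class pairing above, everything is a direct application of Lemma \ref{lem-qint}.
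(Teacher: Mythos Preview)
Your proof is correct and follows essentially the same route as the paper: both arguments derive the exact identity $\qbinom{p-1}{k+d}_\alpha=C_k\qbinom{p-1}{k}_\alpha$ from the definition, then pair the $d$ numerator factors with the $d$ denominator factors according to their residue classes modulo $d$ and apply Lemma~\ref{lem-qint} to each pair. Your permutation $\sigma$ (indexed by the numerator) is the inverse of the paper's $i\mapsto j_i$ (indexed by the denominator), and in the special case $k=ld$ both single out the same pair $[p-k-1]_\alpha/[k+d]_\alpha$ as the only nontrivial contribution.
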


\begin{proof}
By the definition, we have
\begin{align*}
\qbinom{p-1}{k+\ord_p(\alpha)}_{\alpha}&=
\frac{[p-1]_{\alpha}\cdots [p-k-\ord_{p}(\alpha)]_{\alpha}}{[k+\ord_p(\alpha)]_{\alpha}\cdots [1]_{\alpha}}\\
&= \frac{[p-k-1]_{\alpha}\cdots [p-k-\ord_p(\alpha)]_{\alpha}}{[k+\ord_p(\alpha)]_{\alpha}\cdots [k+1]_{\alpha}}\times 
\frac{[p-1]_{\alpha}\cdots [p-k]_{\alpha}}{[k]_{\alpha}\cdots [1]_{\alpha}}\\
&=C_k \qbinom{p-1}{k}_{\alpha}.
\end{align*}
Note that for any $1\le i \le \ord_p(\alpha)$, there exists unique $j_i\in\{1,\ldots,\ord_p(\alpha)\}$ such that $k+i\equiv p-k-j_i\mod \ord_p(\alpha)$. This correspondence is one-to-one.
We have
\begin{align*}
C_k&= \frac{[p-k-1]_{\alpha}\cdots [p-k-\ord_p(\alpha)]_{\alpha}}{[k+\ord_p(\alpha)]_{\alpha}\cdots [k+1]_{\alpha}}\\
&=\frac{[p-k-j_1]_{\alpha}}{[k+1]_{\alpha}}\cdot \frac{[p-k-j_2]_{\alpha}}{[k+2]_{\alpha}}\cdots \frac{[p-k-j_{\ord_p(\alpha)}]_{\alpha}}{[k+\ord_p(\alpha)]_{\alpha}}.
\end{align*}
By $1\le k+1,\ldots,k+\ord_p(\alpha)\le p-1$,
Lemma \ref{lem-qint} implies $\displaystyle \frac{[p-k-j_i]_{\alpha}}{[k+i]_{\alpha}}\in\mathbb{Z}_{(p)}\setminus\{0\}$ for every $1\le i\le \ord_p(\alpha)$ and $C_k\in \mathbb{Z}_{(p)}\setminus\{0\}$ holds.

In tha case where $k$ is given by $k=l\ord_p(\alpha)$,
Lemma \ref{lem-qint}  implies
\begin{align*}
C_{l\ord_p(\alpha)}\equiv& \frac{[p-k-1]_{\alpha}}{[k+\ord_p(\alpha)]_{\alpha}}\times \frac{[p-k-2]_{\alpha}}{[k+\ord_{p}(\alpha)-1]_{\alpha}}\times\cdots\times \frac{[p-k-\ord_p(\alpha)]_{\alpha}}{[k+1]_{\alpha}}\\
\equiv& \frac{p-k-1}{k+\ord_{p}(\alpha)}
\equiv \frac{I_p(\alpha)-l}{l+1} \mod p.
\end{align*}
\end{proof}
\begin{lem}\label{lem-qbinom2}
Let $\alpha\in\mathbb{Q}\setminus\{0,1\}$ and $p\in P$ satisfying $v_p(\alpha)=v_p(\alpha-1)=0$. Let $k$ be a positive integer satisfying $0\le k< \ord_p(\alpha)$. Then
\[
\qbinom{p-1}{k}_{\alpha}\equiv
\begin{cases}
1\mod p& \text{if } k=0\\
0\mod p& \text{if } k\neq 0
\end{cases}
\]
holds.
\end{lem}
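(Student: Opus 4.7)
The plan is to reduce the lemma to the single observation that $[p-1]_{\alpha}\equiv 0\bmod p$, which is a direct consequence of Fermat's little theorem and the hypothesis $v_p(\alpha-1)=0$, together with the fact that every factor in the denominator is a unit mod $p$.

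First, the case $k=0$ is immediate from the convention $\qbinom{p-1}{0}_{\alpha}=1$. So assume $1\le k<\ord_p(\alpha)$ and write
\[
\qbinom{p-1}{k}_{\alpha}=\frac{[p-1]_{\alpha}[p-2]_{\alpha}\cdots [p-k]_{\alpha}}{[k]_{\alpha}[k-1]_{\alpha}\cdots [1]_{\alpha}}.
\]
For the denominator, since $1\le j\le k<\ord_p(\alpha)$ we have $j\not\equiv 0\bmod \ord_p(\alpha)$ for every $j$ in the range, so by the remark after Lemma \ref{lem-qint} each $[j]_{\alpha}$ is a unit in $\mathbb{Z}_{(p)}$. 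In particular the whole denominator is a unit, which also ensures that the $q$-binomial lies in $\mathbb{Z}_{(p)}$ (the numerator is already in $\mathbb{Z}_{(p)}$ because $v_p(\alpha)=0$ and $v_p(\alpha-1)=0$ make $[m]_{\alpha}=(\alpha^m-1)/(\alpha-1)\in \mathbb{Z}_{(p)}$ for every $m$).

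For the numerator, Fermat's little theorem gives $\alpha^{p-1}\equiv 1\bmod p$, whence
\[
[p-1]_{\alpha}=\frac{\alpha^{p-1}-1}{\alpha-1}\equiv 0\bmod p.
\]
Since this factor is present in the numerator as soon as $k\ge 1$, the entire quotient is congruent to $0$ mod $p$, giving the claim.

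There is essentially no obstacle here; the only small point to verify is that one is allowed to reduce the rational expression mod $p$, which is guaranteed by the denominator being a unit in $\mathbb{Z}_{(p)}$. Alternatively, one could quote Lemma \ref{lem-qint} directly with $k=p-1$ and $l=\ord_p(\alpha)$ to restate $[p-1]_{\alpha}\equiv 0\bmod p$, but invoking Fermat is more transparent.
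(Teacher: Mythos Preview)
Your proof is correct and follows essentially the same route as the paper's: the denominator $[k]_\alpha!$ is a unit mod $p$ because $k<\ord_p(\alpha)$, and the numerator contains the factor $[p-1]_\alpha\equiv 0\bmod p$. The only cosmetic difference is that you justify $[p-1]_\alpha\equiv 0$ via Fermat's little theorem, whereas the paper simply asserts it (implicitly using the remark after Lemma~\ref{lem-qint}, since $\ord_p(\alpha)\mid p-1$).
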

\begin{proof}
We have a conclusion immediately if $k=0$.
Assume $1\le k< \ord_p(\alpha)$. Note that $[k]_{\alpha}!\not\equiv 0 \mod p$ holds since  $k < \ord_p(\alpha) $ holds. 
By $[p-1]_{\alpha}\equiv 0 \mod p$,  we have $\displaystyle\qbinom{p-1}{k}_{\alpha}\equiv 0 \mod p$.
\end{proof}

\begin{lem}\label{lem-qbinom}
For $\alpha \in \Q\setminus\{0,1\}$
and $p \in \Prime$ with $v_p(\alpha)=v_p(\alpha-1)=0$,
\begin{align*}
	\qbinom{p - 1}{k}_{\alpha}
	&\equiv 
\left\{
	\begin{array}{ccl}
		\binom{I_{p}(\alpha)}{k/\ord_{p}(\alpha)} &\mod p & \text{if }k \in (\ord_{p}(\alpha)) \\
		0 &\mod p & \mathrm{else} \\
	\end{array}
\right.
\end{align*}
holds.
\end{lem}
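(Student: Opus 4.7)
Writing $d = \ord_p(\alpha)$ and $I = I_p(\alpha)$, so that $p-1 = Id$, the plan is to decompose any $k$ with $0 \le k \le p-1$ as $k = ld + r$ with $0 \le r < d$, and compute $\qbinom{p-1}{k}_\alpha \bmod p$ by induction on $l$, using the preceding three lemmas as the engine.

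For the base cases $l=0$, i.e.\ $0 \le k < d$, Lemma \ref{lem-qbinom2} handles everything: the value is $1$ if $k=0$ (matching $\binom{I}{0}$) and $0 \bmod p$ otherwise.

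For the inductive step I would apply Lemma \ref{lem-p-1} to pass from $\qbinom{p-1}{k}_\alpha$ to $\qbinom{p-1}{k+d}_\alpha$ via the $p$-adic integer $C_k$. In the case $r \neq 0$, the base case already gives $0 \bmod p$, and since $C_k \in \mathbb{Z}_{(p)}$, multiplying by $C_k$ preserves the vanishing; so $\qbinom{p-1}{ld+r}_\alpha \equiv 0 \bmod p$ for every valid $l$. In the case $r=0$, I would use the explicit evaluation $C_{ld} \equiv (I-l)/(l+1) \bmod p$ from the second half of Lemma \ref{lem-p-1}, combined with the Pascal-type identity $\binom{I}{l+1} = \frac{I-l}{l+1}\binom{I}{l}$, to propagate $\qbinom{p-1}{ld}_\alpha \equiv \binom{I}{l} \bmod p$ from $l$ to $l+1$, with the induction running as long as $ld \le p-1-d$, i.e.\ $l \le I-1$, which is exactly the range needed.

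There isn't really a hard step here: the lemmas have been set up precisely so that this two-case induction closes. The only point requiring a little care is verifying that Lemma \ref{lem-p-1} is legally applicable at each stage, namely checking $0 \le k \le p-1-d$ for the particular $k = (l-1)d + r$ (in the non-multiple case) or $k = ld$ (in the multiple case), but both reduce to $k+d \le p-1$, which is ensured by assumption.
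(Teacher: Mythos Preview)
Your proposal is correct and follows essentially the same route as the paper: both arguments handle the non-multiple case via Lemma~\ref{lem-qbinom2} together with repeated application of Lemma~\ref{lem-p-1}, and in the multiple case iterate the relation $C_{l\ord_p(\alpha)} \equiv (I_p(\alpha)-l)/(l+1)$ starting from $\qbinom{p-1}{0}_\alpha = 1$ to build up the binomial coefficient. The paper simply writes the resulting telescoping product directly rather than phrasing it as an induction, but the content is identical.
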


\begin{proof}
The case of $k \not\in (\ord_{p}(\alpha))$ is clear by Lemma \ref{lem-p-1} and Lemma \ref{lem-qbinom2}. 
If we suppose $k=l\ord_{p}(\alpha)$, then we have
\begin{align*}
	\qbinom{p - 1}{k}_{\alpha}
	&\equiv
		\frac{
			(I_p(\alpha) - (l - 1))(I_p(\alpha) - (l - 2)) \cdots (I_p(\alpha) - 0)
		}{
			(l -1 + 1)(l - 2 + 1)\cdots(0 + 1)
		}
		\cdot 1 \\	
	&\equiv 
	\binom{I_{p}(\alpha)}{l} \mod p.
\end{align*}
\end{proof}

The following proposition is the prime congruence of the $q$-Fibonacci sequence.
\begin{prop}
\label{prop-old-qfib-p}
For $\alpha \in \Q\setminus\{0,1\}$
and $p \in \Prime$ with $v_p(\alpha)=v_p(\alpha-1)=0$,
\begin{align*}
	F_p(\alpha)
	&\equiv 
	\sum_{k \in S_{p, 1}(\alpha)}
		\alpha^{\frac{p-1-2k\ord_{p}(\alpha)}{10}}
		\binom{I_{p}(\alpha)}{k}
	-
	\resi{\alpha}{p}\sum_{k \in S_{p, 2}(\alpha)}
		\binom{I_{p}(\alpha)}{k}
	\mod p
\end{align*}
holds, where
\begin{equation*}
	S_{p, i}(\alpha) 
	= 
	\{k \in \Z\mid 2k\ord_{p}(\alpha) \equiv p - i \mod 5\}
\end{equation*}
for $1\le i\le 4$.
\end{prop}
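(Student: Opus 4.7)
The plan is to substitute $n = p-1$ into Andrews' explicit formula (Theorem \ref{q-Fibonacci}) and reduce modulo $p$ using Lemma \ref{lem-qbinom}. Assuming $p$ odd (the case $p=2$ is trivial), the floor $\lfloor (p-1-5j)/2\rfloor$ depends on the parity of $j$, so I split the Andrews sum according to $j=2k$ and $j=2k+1$. A direct computation gives
\[
F_p(\alpha)=\sum_{k\in\mathbb{Z}}\alpha^{10k^2+k}\qbinom{p-1}{\tfrac{p-1}{2}-5k}_{\alpha}-\sum_{k\in\mathbb{Z}}\alpha^{10k^2+11k+3}\qbinom{p-1}{\tfrac{p-1}{2}-5k-3}_{\alpha}.
\]

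Next, setting $d:=\ord_{p}(\alpha)$, Lemma \ref{lem-qbinom} kills every $q$-binomial whose lower index is not a multiple of $d$ and replaces the surviving ones by $\binom{I_{p}(\alpha)}{l}$, where $l$ is the quotient. I reindex each sum by $l$: putting $(p-1)/2-5k=ld$ (resp.\ $(p-1)/2-5k-3=ld$) and solving for $k$, the condition that $k$ be an integer is exactly $2ld\equiv p-1 \mod 5$ (resp.\ $2ld\equiv p-2\mod 5$), matching the defining congruences of $S_{p,1}(\alpha)$ and $S_{p,2}(\alpha)$.

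The remaining task is to simplify the exponents of $\alpha$ modulo $p$, using $\alpha^d\equiv 1\mod p$ and $p\equiv 1\mod d$ (which follows from $d\mid p-1$). For the first sum, with $k=(p-1-2ld)/10$, a short algebraic manipulation shows $10k^2+k=k(p-2ld)$; reducing the second factor modulo $d$ yields $\alpha^{10k^2+k}\equiv \alpha^{k\cdot p}\equiv\alpha^{k}=\alpha^{(p-1-2ld)/10} \mod p$, which is exactly the first term of the proposition. For the second sum, a parallel computation writes the exponent as $E=j(p-1-2ld)/2$ where $5j+1=d(I_{p}(\alpha)-2l)$, whence $2E\equiv 0 \mod d$ and consequently $\alpha^{E}\equiv\pm 1\mod p$.

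The main obstacle lies in identifying this sign with $\resi{\alpha}{p}$. The key observation is that regardless of the parity of $d$, one has $\alpha^{E}\equiv\alpha^{(p-1)/2}\mod p$: if $d$ is odd, then $I_{p}(\alpha)$ is even and both sides equal $1$; if $d$ is even, then $\alpha^{d/2}\equiv -1$ and a careful counting of the parity of $j(I_{p}(\alpha)-2l)$ (using that $j$ is odd in this case) shows both sides reduce to $(-1)^{I_{p}(\alpha)}$. Euler's criterion then identifies $\alpha^{(p-1)/2}$ with $\resi{\alpha}{p}$, giving the second term of the proposition and completing the proof.
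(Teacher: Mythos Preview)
Your proof is correct and follows essentially the same route as the paper's: Andrews' formula at $n=p-1$, reduction of the $q$-binomials via Lemma~\ref{lem-qbinom}, and a parity split on $j$ that produces exactly the sets $S_{p,1}(\alpha)$ and $S_{p,2}(\alpha)$. The one spot where the paper is more economical is the $S_{p,2}$ exponent: it writes $\alpha^{j(5j+1)/2}=\bigl(\alpha^{(p-1-2k\ord_p(\alpha))/2}\bigr)^{j}\equiv\resi{\alpha}{p}^{\,j}=\resi{\alpha}{p}$ directly, using $\alpha^{k\ord_p(\alpha)}\equiv 1$ and the fact that $j$ is odd, thereby avoiding your case distinction on the parity of $\ord_p(\alpha)$.
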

\begin{proof}
For $k\in \mathbb{Z}$, we have
\begin{align*}
	&\floor{(p - 1 - 5j)/2} = k\ord_{p}(\alpha)
	\textrm{\,\, for\,\,some\,\,} j 
	\\
	&\qquad\Leftrightarrow
	k\ord_{p}(\alpha) \le (p - 1 - 5j)/2 < k\ord_{p}(\alpha) + 1
	\textrm{\,\, for\,\,some\,\,} j 
	\\
	&\qquad\Leftrightarrow
	2k\ord_{p}(\alpha) = p - 1- 5j \textrm{\,\,or\,\,} p - 2 - 5j
	\textrm{\,\, for\,\,some\,\,} j 
	\\
	&\qquad\Leftrightarrow
	k \in S_{p, 1}(\alpha) \textrm{\,\,or\,\,}  k \in S_{p, 2}(\alpha).
\end{align*}
We note that
\begin{equation*}
	j 
	= 
	\frac{
		p - i - 2k\ord_{p}(\alpha)
	}{
		5
	}\in\mathbb{Z}
\end{equation*}
holds for $k\in S_{p,i}(\alpha)$.
Since we have

\begin{align*}
	\alpha^{\frac{j(5j + 1)}{2}} 
	= 
	\alpha^{
		\frac{
			(p - 1 - 2k\ord_{p}(\alpha))(p - 2k\ord_{p}(\alpha))
		}{
			10
		}
	}
	= 
	\left(
		\alpha^{p - 2k\ord_{p}(\alpha)}
	\right)^{
		\frac{
			p - 1 - 2k\ord_{p}(\alpha)
		}{
			10
		}
	}
	= 
	\alpha^{
		\frac{
			p - 1 - 2k\ord_{p}(\alpha)
		}{
			10
		}
	}
\end{align*}
for every $k \in S_{p, 1}(\alpha)$ and 
\begin{align*}
	\alpha^{\frac{j(5j + 1)}{2}} 
	&= 
	\alpha^{
		\frac{
			(p - 2 - 2k\ord_{p}(\alpha))(p - 1 - 2k\ord_{p}(\alpha))
		}{
			10
		}
	}
	= 
	\left(
		\alpha^{
			\frac{
				p - 1 - 2k\ord_{p}(\alpha)
			}{
				2
			}
		}
	\right)^{
		\frac{
			p - 2 - 2k\ord_{p}(\alpha)
		}{
			5
		}
	}
	\\
	&= 
	\resi{\alpha}{p}^{
		\frac{
			p - 2 - 2k\ord_{p}(\alpha)
		}{
			5
		}
	}
	= 
	\resi{\alpha}{p}
\end{align*}
for every $k \in S_{p, 2}(\alpha)$, we obtain
\begin{equation*}
	F_p(\alpha) 
	\equiv 
	\sum_{k \in S_{p, 1}(\alpha)}
		\alpha^{\frac{p-1-2k\ord_{p}(\alpha)}{10}}
		\binom{I_{p}(\alpha)}{k}
	-
	\resi{\alpha}{p}\sum_{k \in S_{p, 2}(\alpha)}
		\binom{I_{p}(\alpha)}{k} \mod p
\end{equation*}
by Theorem \ref{q-Fibonacci} and Lemma \ref{lem-qbinom}.
\end{proof}



\subsection{The certain congruence of the $q$-Fibonacci sequence}

In this subsection, we show our main theorem (Theorem \ref{thm-q-fib}).
\begin{lem}\label{lem-cong-q-fib}
Let $\alpha \in \Q\setminus\{0,1\}$
and $p \in \Prime$ with $v_p(\alpha)=v_p(\alpha-1)=0$.
If $\ord_p(\alpha)\not\equiv 0 \mod 5$, then
\[
F_p(\alpha)\equiv G_{I_p(\alpha),\ord_p(\alpha)} \mod p
\]
holds, where 
\[
G_{n,m}:=
(-1)^n\sum_{k\in 5\mathbb{Z}}\left\{\binom{n}{3n+k}-\binom{n}{3\left(n-\left(\frac{m}{5}\right)m\right)+k}\right\}
\]
for any $n$ and $m\in\mathbb{Z}$.
\end{lem}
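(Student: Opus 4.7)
The plan is to start from the expression for $F_p(\alpha) \bmod p$ provided by Proposition \ref{prop-old-qfib-p} and reduce it step by step to the form $G_{I_p(\alpha), \ord_p(\alpha)}$. Writing $d = \ord_p(\alpha)$ and $I = I_p(\alpha)$, so $p-1 = Id$, the program has two independent parts: rewriting the index sets $S_{p,1}(\alpha)$ and $S_{p,2}(\alpha)$ as the two residue classes mod $5$ appearing in $G_{I,d}$, and showing that the $\alpha$-dependent coefficients $\alpha^{(p-1-2kd)/10}$ and $\resi{\alpha}{p}$ both collapse to the common scalar $(-1)^I$.

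For the index sets, the hypothesis $d \not\equiv 0 \bmod 5$ lets me solve $2kd \equiv p - i \bmod 5$ for $k$: using $p-1 \equiv Id \bmod 5$ and $2^{-1} \equiv 3 \bmod 5$, the case $i = 1$ yields $k \equiv 3I \bmod 5$, and the case $i = 2$ yields $k \equiv 3(I - d^{-1}) \bmod 5$. The bridge to $G_{n,m}$ is the identity $d^{-1} \equiv \lsym{d}\, d \bmod 5$, which follows from $d^2 \equiv \lsym{d} \bmod 5$ by inspection of the four nonzero residues mod $5$. Consequently $S_{p,1} = 3I + 5\Z$ and $S_{p,2} = 3(I - \lsym{d}\, d) + 5\Z$, and reparametrizing by $k = 3I + k'$ or $k = 3(I - \lsym{d}\, d) + k'$ with $k' \in 5\Z$ puts the two binomial sums in exactly the shape of the two sums in $G_{I,d}$.

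For the coefficients, the key observation is that $p$ must be odd (the hypotheses $v_p(\alpha) = v_p(\alpha - 1) = 0$ are incompatible at $p = 2$), hence $Id = p-1$ is even and at least one of $I, d$ is even. Plugging $k = 3I + 5l$ into $(p-1-2kd)/10$ gives $-(I+2l)d/2 \in \Z$, and by Euler's criterion $\resi{\alpha}{p} \equiv \alpha^{Id/2} \bmod p$. A parity case split on $I$ then handles both exponents simultaneously: if $I$ is even both exponents are multiples of $d$, so $\alpha^{\bullet} \equiv 1 = (-1)^I$; if $I$ is odd then $d$ is even, and $\alpha^{d/2}$ is a square root of $1$ in $(\Z/p\Z)^\times$ that cannot equal $+1$ (else $\ord_p(\alpha) \le d/2$), forcing $\alpha^{d/2} \equiv -1$, and then $(\alpha^{d/2})^{-(I+2l)}$ and $(\alpha^{d/2})^I$ both equal $(-1)^I = -1$.

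Combining the two reductions and factoring out the common sign $(-1)^I$ from the expression in Proposition \ref{prop-old-qfib-p} produces $G_{I,d}$ on the nose. The main obstacle I expect is precisely the sign computation just described: the identification $\alpha^{d/2} \equiv -1$ when $d$ is even, forced by $\ord_p(\alpha) = d$, is what makes the two $\alpha$-dependent coefficients collapse to the \emph{same} scalar $(-1)^I$ and thereby allows them to be pulled out of both sums; once this is in hand, the remainder is book-keeping on residues modulo $5$.
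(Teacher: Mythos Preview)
Your proposal is correct and follows essentially the same route as the paper: identify $S_{p,1}$ and $S_{p,2}$ via $2^{-1}\equiv 3\bmod 5$ and $d^{-1}\equiv\lsym{d}d\bmod 5$, then do a parity split on $I_p(\alpha)$ to collapse both $\alpha$-powers to $(-1)^{I_p(\alpha)}$. Your write-up is in fact a bit more explicit than the paper's in two places---the observation that $p$ must be odd, and the justification that $\alpha^{d/2}\equiv -1\bmod p$ when $d$ is even (the paper uses this without comment)---but the underlying argument is the same.
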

\begin{rmk}
For $n$, $m\in\mathbb{Z}_{>0}$, $G_{n,m}$ is a finite sum. It is also noted $G_{n,0}=0$ in the case where $m\equiv 0 \mod 5$.  
\end{rmk}
\begin{proof}
Since $\ord_p(\alpha)\not\equiv 0 \mod 5$, we get
\begin{align*}
S_{p,1}=&\{k\mid 2k\ord_p(\alpha)\equiv p-1 \mod 5\}\\
=&\{k\mid 2k\ord_p(\alpha)\equiv I_p(\alpha)\ord_p(\alpha) \mod 5\}\\
=&\{k\mid k\equiv 3I_p(\alpha) \mod 5\}
\end{align*}
and
\begin{align*}
S_{p,2}=&\{k\mid 2k\ord_p(\alpha)\equiv p-2 \mod 5\}\\
=&\{k\mid 2k\ord_p(\alpha)\equiv I_p(\alpha)\ord_p(\alpha)-1\mod 5\}\\
=&\{k\mid k\equiv 3I_p(\alpha)-3\ord_p(\alpha)^{-1} \mod 5\}\\
=&\{k\mid k\equiv 3\left(I_p(\alpha)-\lsym{\ord_p(\alpha)}\ord_p(\alpha)\right) \mod 5\}.
\end{align*}
By using Proposition \ref{prop-old-qfib-p}, we have 
\begin{align*}
	F_p(\alpha)
	&\equiv 
	\sum_{k \in S_{p, 1}(\alpha)}
		\alpha^{\frac{p-1-2k\ord_{p}(\alpha)}{10}}
		\binom{I_{p}(\alpha)}{k}
	-
	\resi{\alpha}{p}\sum_{k \in S_{p, 2}(\alpha)}
		\binom{I_{p}(\alpha)}{k}
	\\
	&\equiv
	\sum_{k \in S_{p, 1}(\alpha)}
		\alpha^{\frac{\ord_{p}(\alpha)(I_p(\alpha)-2k)}{10}}
		\binom{I_{p}(\alpha)}{k}
	-
	\alpha^{\frac{\ord_{p}(\alpha)I_p(\alpha)}{2}}\sum_{k \in S_{p, 2}(\alpha)}
		\binom{I_{p}(\alpha)}{k} \mod p.
\end{align*}
\begin{enumerate}[i)]
\item\label{prop-q-fib-1}
In the case where $2 | I_{p}(\alpha)$, we have
\begin{align*}
	F_p(\alpha)
	&\equiv
	\sum_{k \in S_{p, 1}(\alpha)}
		\alpha^{\ord_{p}(\alpha)\frac{I_p(\alpha)-2k}{10}}
		\binom{I_{p}(\alpha)}{k}
	-
	\alpha^{\ord_{p}(\alpha)\frac{I_p(\alpha)}{2}}\sum_{k \in S_{p, 2}(\alpha)}
		\binom{I_{p}(\alpha)}{k}
	\\
	&\equiv
	\sum_{k \in S_{p, 1}(\alpha)}
		\binom{I_{p}(\alpha)}{k}
	-
	\sum_{k \in S_{p, 2}(\alpha)}
		\binom{I_{p}(\alpha)}{k}
	\\
	&\equiv 
	\sum_{k\in5\mathbb{Z}}
		\left\{
			\binom{I_p(\alpha)}{3I_p(\alpha)+k}-\binom{I_p(\alpha)}{3\left(I_p(\alpha)- \lsym{\ord_p(\alpha}\ord_p(\alpha)\right)+k}
		\right\} \mod p.
\end{align*}
\item
In the case where $2 \not| I_{p}(\alpha)$, we have
\begin{align*}
	F_p(\alpha)
	&\equiv
	\sum_{k \in S_{p, 1}(\alpha)}
		\alpha^{\frac{\ord_{p}(\alpha)}{2}\frac{I_p(\alpha)-2k}{5}}
		\binom{I_{p}(\alpha)}{k}
	-
	\alpha^{\frac{\ord_{p}(\alpha)}{2}I_p(\alpha)}\sum_{k \in S_{p, 2}(\alpha)}
		\binom{I_{p}(\alpha)}{k}
	\\
	&\equiv
	\sum_{k \in S_{p, 1}(\alpha)}
		(-1)^{\frac{I_p(\alpha)-2k}{5}}
		\binom{I_{p}(\alpha)}{k}
	-
	(-1)^{I_p(\alpha)}\sum_{k \in S_{p, 2}(\alpha)}
		\binom{I_{p}(\alpha)}{k}
	\\
	&\equiv
	-\sum_{k \in S_{p, 1}(\alpha)}
		\binom{I_{p}(\alpha)}{k}
	+
	\sum_{k \in S_{p, 2}(\alpha)}
		\binom{I_{p}(\alpha)}{k}\\
	&\equiv 
	-\sum_{k\in5\mathbb{Z}}
		\left\{
			\binom{I_p(\alpha)}{3I_p(\alpha)+k}-\binom{I_p(\alpha)}{3\left(I_p(\alpha)- \lsym{\ord_p(\alpha}\ord_p(\alpha)\right)+k}
		\right\} \mod p.
\end{align*}
\end{enumerate} 
\end{proof}

\begin{lem}\label{lem-G}
\begin{enumerate}[i)]
\item These
\[
G_{1,1}=1,\;G_{2,1}=1,\;G_{1,2}=0\text{ and }G_{2,2}=1
\]
hold.
\item Let $n\in\mathbb{Z}_{>0}$. For any $m$, $m'\in\mathbb{Z}$ satisfying $m\equiv \pm m' \mod 5$, 
\[
G_{n,m}=G_{n,m'}
\]
holds.
\item For any $n\in\mathbb{Z}_{>0}$ and $m\in\mathbb{Z}$,
\[
G_{n,m}+G_{n+1,m}=G_{n+2,m}
\]
holds. Therefore, if $m\not\equiv 0 \mod 5$, $G_{n,m}=F_{n+\lsym{m}}$ holds.
\end{enumerate}
\end{lem}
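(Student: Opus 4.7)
I will handle the three parts in order, with (iii) being the crux. For (i), each identity follows by direct unpacking of the definition: since $\binom{n}{j}$ vanishes outside $0\le j\le n$, each sum over $k\in 5\mathbb{Z}$ contains at most a couple of nonzero terms, so each value reduces to a short finite computation.

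For (ii), introduce the auxiliary quantity $A(n,a):=\sum_{k\in 5\mathbb{Z}}\binom{n}{a+k}$, which depends on $a$ only through its residue modulo $5$. Then
\[
G_{n,m}=(-1)^n\bigl[A(n,3n)-A\bigl(n,3(n-\lsym{m}m)\bigr)\bigr],
\]
and since the first summand is independent of $m$, it suffices to show that the second summand agrees for $m$ and $m'$. The case $m\equiv m'\pmod{5}$ is immediate because $\lsym{\cdot}$ depends only on the residue mod $5$. For $m\equiv -m'\pmod{5}$, the identity $\lsym{-1}=1$ gives $\lsym{m}=\lsym{m'}$ and hence $\lsym{m}m\equiv -\lsym{m'}m'\pmod{5}$; combined with the symmetry $A(n,a)=A(n,n-a)$ coming from $\binom{n}{j}=\binom{n}{n-j}$, a short calculation modulo $5$ closes the case.

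For (iii), Pascal's rule summed over $k\in 5\mathbb{Z}$ yields $A(n+1,a)=A(n,a)+A(n,a-1)$. Expanding $G_{n+2,m}-G_{n+1,m}-G_{n,m}$ and applying this recurrence twice reduces the assertion to the identity
\[
A(n+2,a+6)+A(n+1,a+3)-A(n,a)=\sum_{i=0}^{4}A(n,a+i),
\]
valid for every $a\in\mathbb{Z}$ up to the obvious shifts by multiples of $5$ in the first argument. The right-hand side sums $\binom{n}{j}$ over every integer $j$, so it equals $2^n$ and in particular is independent of $a$. Applying the identity once with $a=3n$ and once with $a=3(n-\lsym{m}m)$ and subtracting, the $2^n$ terms cancel and give exactly $G_{n+2,m}=G_{n+1,m}+G_{n,m}$. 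When $m\equiv 0\pmod{5}$ one has $\lsym{m}m=0$, so every $G_{n,m}$ is zero and the recurrence is trivial. The final identification $G_{n,m}=F_{n+\lsym{m}}$ then follows for $m\not\equiv 0\pmod{5}$ by using (ii) to reduce to the representatives $m=1,2$, combined with the matching of initial values $G_{1,m},G_{2,m}$ against $F_{1+\lsym{m}},F_{2+\lsym{m}}$ from (i).

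The main obstacle is spotting the telescoping observation in (iii): once one recognizes that the combination $A(n+2,a+6)+A(n+1,a+3)-A(n,a)$ collapses to a sum over all five residue classes and is therefore constant in $a$, the rest is routine Pascal bookkeeping and initial-value matching.
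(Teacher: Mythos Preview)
Your proof is correct and follows essentially the same route as the paper's own argument: direct evaluation for (i), the shift-invariance of $\sum_{k\in 5\mathbb{Z}}\binom{n}{a+k}$ together with the symmetry $\binom{n}{j}=\binom{n}{n-j}$ and $\lsym{-1}=1$ for (ii), and Pascal's rule iterated to collapse the combination in (iii) to a sum over all five residue classes. The only cosmetic difference is that you package things via $A(n,a)$ and observe explicitly that $\sum_{i=0}^{4}A(n,a+i)=2^n$, whereas the paper keeps the binomials and cancels the two full sums against one another directly; one small slip in your write-up is the phrase ``shifts by multiples of $5$ in the first argument'' (you mean the second argument $a$).
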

\begin{proof}
\begin{enumerate}[i)]
\item They are verified directly.
\item 
Since the following property of the finite summation
\begin{align}
\sum_{k\in 5\mathbb{Z}}\binom{m}{k-i}=\sum_{k\in 5\mathbb{Z}}\binom{m}{k+5-i}
\label{eq-finitesummation}
\end{align}
holds for $i=0$, $1$, $2$, $3$ and $4$, 
it is enough to show the case when $m'\equiv -m \mod 5$. Since 
\[
\lsym{1}=\lsym{-1},
\]
 we have
\begin{align*}
G_{n,m}
=&(-1)^n \sum_{k\in5\mathbb{Z}}\left\{\binom{n}{3n+k}-\binom{n}{3\left(n- \lsym{m}m\right)+k}\right\}\\
=&(-1)^n \sum_{k\in5\mathbb{Z}}\left\{\binom{n}{3n+k}-\binom{n}{-2n+ 3\lsym{m}m+k}\right\}\\
=&(-1)^n \sum_{k\in5\mathbb{Z}}\left\{\binom{n}{3n+k}-\binom{n}{3\left(n- \lsym{-m}(-m)\right)+k}\right\}\\
=&(-1)^n \sum_{k\in5\mathbb{Z}}\left\{\binom{n}{3n+k}-\binom{n}{3\left(n- \lsym{m'}m'\right)+k}\right\}\\
=&G_{n,m'}.
\end{align*}
\item By
Pascal's rule and (\ref{eq-finitesummation}), we have
\begin{align*}
&G_{n,m}+G_{n+1,m}-G_{n+2,m}\\
=&(-1)^n\sum_{k\in5\mathbb{Z}}\left\{
\binom{n}{3n+k}-\binom{n}{3n-3\lsym{m}m+k}
-\binom{n+1}{3n+3+k}\right. \\
&\left. +\binom{n+1}{(3n+3-3\lsym{m}m)+k}
-\binom{n+2}{3n+6+k}+\binom{n+2}{(3n+6-3\lsym{m}m)+k}
\right\}\\
=&(-1)^n\sum_{k\in5\mathbb{Z}}\left\{
\binom{n}{3n+k}-\binom{n}{3n-3\lsym{m}m+k}
-\binom{n}{3n+3+k}-\binom{n}{3n+2+k}\right. \\
& +\binom{n}{(3n-3\lsym{m}m+3+k}
+\binom{n}{(3n-3\lsym{m}m+2)+k}\\
& -\binom{n+1}{3n+6+k}
-\binom{n+1}{3n+5+k}\\
&\left. +\binom{n+1}{3n-3\lsym{m}m+1+k}
+\binom{n+1}{3n-3\lsym{m}m+k}\right\}\\
=&(-1)^n \sum_{k\in5\mathbb{Z}}\left(\sum_{0\le i\le 5} -\binom{n}{3n+i+k}+\binom{n}{3n-3\lsym{m}+i+k}\right)=0.
\end{align*}
\end{enumerate}
\end{proof}

By Lemma \ref{lem-cong-q-fib} and Lemma \ref{lem-G},
we conclude as follows:
\begin{thm}[Theorem \ref{thm-q-fib}]
For $\alpha \in \Q^\times$
and $p \in \Prime$ satisfying $v_p(\alpha)=v_p(\alpha-1)=0$
and $\ord_{p}(\alpha) \not\equiv 0 \mod 5$,
\begin{equation*}
F_{p}(\alpha)\equiv F_{I_p(\alpha)+\lsym{\ord_p(\alpha)}} \mod p
\end{equation*}
holds.
\end{thm}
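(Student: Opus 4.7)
The plan is to deduce the theorem directly from Lemma \ref{lem-cong-q-fib} and Lemma \ref{lem-G}, with essentially no new computation at this stage. First I would invoke Lemma \ref{lem-cong-q-fib} under the hypotheses $v_p(\alpha) = v_p(\alpha-1) = 0$ and $\ord_{p}(\alpha) \not\equiv 0 \mod 5$ to rewrite the residue of $F_p(\alpha)$ modulo $p$ as the integer $G_{I_p(\alpha),\,\ord_{p}(\alpha)}$, where $G_{n,m}$ is the alternating binomial sum introduced in that lemma. Note that $v_p(\alpha - 1) = 0$ forces $\alpha \neq 1$ as a rational number, so the auxiliary restriction $\alpha \in \Q \setminus \{0,1\}$ required by the lemma follows automatically from the hypotheses of the theorem.

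Next I would apply part (iii) of Lemma \ref{lem-G}, which asserts precisely that $G_{n,m} = F_{n + \lsym{m}}$ whenever $m \not\equiv 0 \mod 5$. Specializing to $n = I_p(\alpha)$ and $m = \ord_{p}(\alpha)$, this produces
\[
F_p(\alpha) \equiv G_{I_p(\alpha),\,\ord_{p}(\alpha)} = F_{I_p(\alpha) + \lsym{\ord_{p}(\alpha)}} \mod p,
\]
which is exactly the desired congruence.

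Since all of the substantive work is absorbed into the two preceding lemmas, no genuine obstacle remains at this step; the final proof is essentially a two-step chain. The real difficulty has already been packaged into Lemma \ref{lem-cong-q-fib}, whose argument combines Andrews's explicit formula with the $q$-binomial congruence of Lemma \ref{lem-qbinom}, and into Lemma \ref{lem-G}(iii), where the Fibonacci recurrence $G_{n,m} + G_{n+1,m} = G_{n+2,m}$ is extracted from Pascal's rule and then paired with the initial values of Lemma \ref{lem-G}(i) and the mod-$5$ symmetry of Lemma \ref{lem-G}(ii) to force the closed-form identification with the classical Fibonacci sequence.
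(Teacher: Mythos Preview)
Your proposal is correct and matches the paper's own argument exactly: the paper simply states that the theorem follows by combining Lemma \ref{lem-cong-q-fib} with Lemma \ref{lem-G}, precisely the two-step chain you describe. Your additional remark that $v_p(\alpha-1)=0$ already excludes $\alpha=1$ is a nice clarification the paper leaves implicit.
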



\section{Finite algebraic numbers and the $\A$-transcendence of $q$-Fibonacci sequence}
In this section, we show that, under GRH, the element of $\A$ associated with the $q$-Fibonacci sequence is an $\A$-transcendental number (in Theorem \ref{thm-trans2}).


\subsection{Review on finite algebraic numbers}
\verb| |

This subsection discusses finite algebraic numbers introduced in \cite{Rosen1} which are defined by recurrent sequences. 

For $(\alpha_p)_p\in \prod_{p} \mathbb{Z}/p\mathbb{Z}$ and $n\in\mathbb{Z}_{>0}$, we define 
\[
P_n((\alpha_p)):=\{p\mid \alpha_p\equiv n \mod p\}\subset P.
\]
By abuse of notation, 
the image of $\displaystyle \alpha=(\alpha_p)_p\in \prod_p \mathbb{Z}/p\mathbb{Z}$
under the natural projection to $\A$ is denoted to be $\alpha$.
We say $n$ \textit{occurs infinitely} often in $\alpha$ if $|P_n((\alpha_p)_p)|$ is infinite (we note that it is independent of any choice of representatives in $\displaystyle \prod_p \mathbb{Z}/p\mathbb{Z}$). 

A sequence $(a_n)_n\subset \mathbb{Q}$ is called \textit{recurrent} if there exists a monic polynomial $f(x):=x^d+c_1x^{d-1}+\cdots+c_d\in\mathbb{Q}[x]$ such that
\[
a_{n+d}+c_1a_{n+d-1}+\cdots+c_da_n=0\;\;(\text{on $\mathbb{Q}$})
\]
for every $n\in\mathbb{N}$. Such $f$ and $(a_0,\ldots,a_{d-1})$ is called the characteristic polynomial and the initial value of $(a_n)_n$ respectively.   
\begin{df}[\cite{Rosen1}]
An element $(\alpha_p)_p\in \A$ is called a \textit{finite algebraic number} if there exists a recurrent sequence $(a_n)_n$ such that 
\[
\alpha_p\equiv a_p \mod p
\]
for sufficiently every large $p$. Let $\mathcal{P}_{\A}^0\subset \A$ be the set of finite algebraic numbers.
\end{df}

The finite algebraic numbers are characterized in \cite{Rosen1}.
\begin{thm}[{\cite[Theorem 1.1]{Rosen1}}]\label{thm-finalg}
Let $\alpha:=(\alpha_p)_p\in \A $. Then the following conditions are equivalent:
\begin{enumerate}
\item  The element $\alpha\in \A$ is a finite algebraic number.
\item There exists a Galois extension $L/\mathbb{Q}$ and a map $\phi:\Gal(L/\mathbb{Q})\rightarrow L$ satisfying $\phi(\sigma\tau\sigma^{-1})=\sigma \phi(\tau)$ for $\sigma$, $\tau\in \Gal(L/\mathbb{Q})$ such that
\[
(\alpha_p)_p=(\phi(\mathfrak{F}_p) \mod p)_p,
\]
where $\mathfrak{F}_p$ is a Frobenius map of $L$ at prime $p$.
\end{enumerate}
\end{thm}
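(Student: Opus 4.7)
The plan is to build a direct dictionary between recurrent sequences and equivariant maps $\phi$ by means of the Frobenius congruence $\lambda^p\equiv\mathfrak{F}_p(\lambda)\bmod\mathfrak{P}$ for $\lambda\in\mathcal{O}_L$ and $\mathfrak{P}\mid p$ in $L$, valid for $p$ unramified in $L$. For the implication (1) $\Rightarrow$ (2), I would start with a recurrent sequence $(a_n)$ having characteristic polynomial $f\in\mathbb{Q}[x]$; after passing to the separable square-free part, let $L$ be the splitting field of $f$ with roots $\lambda_1,\dots,\lambda_d$, and write $a_n=\sum_i c_i\lambda_i^n$ by Vandermonde inversion with $c_i\in L$. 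The $\mathbb{Q}$-rationality of $(a_n)$ is equivalent to the transport rule $\sigma(c_i)=c_{\pi_\sigma(i)}$, where $\sigma(\lambda_i)=\lambda_{\pi_\sigma(i)}$. Setting $\phi(\tau):=\sum_i c_i\,\tau(\lambda_i)$, a single change of index $\mu_i=\sigma^{-1}(\lambda_i)$ combined with the transport rule gives the equivariance $\phi(\sigma\tau\sigma^{-1})=\sigma\phi(\tau)$. For all but finitely many $p$, picking any $\mathfrak{P}\mid p$ with Frobenius $\mathfrak{F}_p$, the Frobenius congruence yields $a_p\equiv\phi(\mathfrak{F}_p)\bmod\mathfrak{P}$, which identifies $\alpha_p=a_p\bmod p$ with $\phi(\mathfrak{F}_p)\bmod p$ inside $\mathbb{F}_p\hookrightarrow\mathcal{O}_L/\mathfrak{P}$.

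For (2) $\Rightarrow$ (1), the strategy is to invert the above construction. A conjugacy-class count gives $\dim_\mathbb{Q}\{\text{equivariant }\phi\}=\sum_C|C|=|G|$, since such a $\phi$ is determined by its values $\phi(\tau_C)\in L^{Z(\tau_C)}$ on representatives of the conjugacy classes $C$. On the other hand, for each pair $c,\theta\in L$ the map $\phi_{c,\theta}(\tau):=\sum_{\sigma\in G}\sigma(c)\,\tau\sigma(\theta)$ is equivariant by direct computation, and corresponds to the recurrent sequence $a_n^{(c,\theta)}:=\mathrm{Tr}_{L/\mathbb{Q}}(c\theta^n)$ via $a_p^{(c,\theta)}\equiv\phi_{c,\theta}(\mathfrak{F}_p)\bmod\mathfrak{P}$, its characteristic polynomial being the minimal polynomial of $\theta$ over $\mathbb{Q}$. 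A dimension argument using the normal basis theorem shows that the $\mathbb{Q}$-span of the family $\{\phi_{c,\theta}\}_{c,\theta\in L}$ exhausts the $|G|$-dimensional space of equivariant maps, so any given $\phi$ decomposes as $\sum_j\phi_{c_j,\theta_j}$, and the sum $a_n:=\sum_j\mathrm{Tr}_{L/\mathbb{Q}}(c_j\theta_j^n)$ is the desired recurrent sequence (its characteristic polynomial being the product of the minimal polynomials of the $\theta_j$).

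The main technical obstacle is establishing the surjectivity of $(c,\theta)\mapsto\phi_{c,\theta}$ cleanly, which underpins the inversion in (2) $\Rightarrow$ (1). A secondary but essential point is the well-definedness of $\phi(\mathfrak{F}_p)\bmod p$ as an element of $\mathbb{F}_p$, which is a priori delicate because $\phi(\mathfrak{F}_p)\in L$ and $\mathfrak{F}_p$ depends on a choice of $\mathfrak{P}\mid p$. This is handled by applying the equivariance to the decomposition group $D_\mathfrak{P}=\langle\mathfrak{F}_\mathfrak{P}\rangle$: since $D_\mathfrak{P}$ commutes with $\mathfrak{F}_\mathfrak{P}$, equivariance forces $\phi(\mathfrak{F}_\mathfrak{P})\in L^{D_\mathfrak{P}}$, whose reduction modulo $\mathfrak{P}$ lands in $\mathbb{F}_p\subset\mathcal{O}_L/\mathfrak{P}$; passing to a conjugate prime $\mathfrak{P}'=\sigma\mathfrak{P}$ yields the same $\mathbb{F}_p$-element because $\mathfrak{F}_{\mathfrak{P}'}=\sigma\mathfrak{F}_\mathfrak{P}\sigma^{-1}$ and equivariance gives $\phi(\mathfrak{F}_{\mathfrak{P}'})=\sigma\phi(\mathfrak{F}_\mathfrak{P})$. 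Finitely many ramified or otherwise bad primes form an exceptional set that is invisible in $\mathscr{A}$.
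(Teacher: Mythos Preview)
The paper does not contain a proof of this theorem: it is quoted verbatim from \cite[Theorem~1.1]{Rosen1} and used as a black box, so there is no argument here against which to compare your proposal. Your outline follows the natural strategy one would expect (and, as far as I can tell, the one in Rosen's paper): write a recurrent $(a_n)$ as a power-sum $\sum_i c_i\lambda_i^n$ in the splitting field $L$, define $\phi(\tau)=\sum_i c_i\,\tau(\lambda_i)$, and use $\lambda^p\equiv\mathfrak{F}_\mathfrak{P}(\lambda)\bmod\mathfrak{P}$ to match $a_p$ with $\phi(\mathfrak{F}_p)$; conversely, span the space of equivariant maps by the maps $\phi_{c,\theta}$ attached to trace sequences $\mathrm{Tr}_{L/\mathbb{Q}}(c\theta^n)$. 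If you want feedback on the correctness of the individual steps you should consult \cite{Rosen1} directly, since that is where the proof actually lives.
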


\begin{rmk}
\begin{enumerate}
\item The set $\mathcal{P}_{\A}^0$ is a $\mathbb{Q}$-subalgebra of $\A$ in \cite{Rosen1}.
\item 
\cite[$\S4$]{Rosen1} explains that the $\mathbb{Q}$-algebra $\mathcal{P}_{\A}^0$ is regarded as a finite analogue of an algebraic closure $\overline{\mathbb{Q}}$ of $\mathbb{Q}$.

\end{enumerate}
\end{rmk}

Let $\Qi:=\{\alpha\in \A\mid {}^{\exists} f(X)\in\mathbb{Q}[X]\setminus\{0\}\text{ such that }f(\alpha)=0 \}$.
\begin{thm}[{\cite[Theorem 1.4]{Rosen1}}]\label{integral-closed}
\begin{enumerate}
\item\label{integral-closed1} For each $\alpha\in \mathcal{P}_{\A}^0$, there exists a polynomial $f(x)\in\mathbb{Q}[x]^\times$ such that $f(\alpha)=0$ in $\A$.
\item There is a sequence of inclusions of subsets:
\[
\mathbb{Q}\subsetneq \mathcal{P}_{\A}^0\subsetneq \Qi\subset \A.
\]
\end{enumerate}
\end{thm}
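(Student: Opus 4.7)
The plan is to handle the two parts of Theorem~\ref{integral-closed} separately. Part~(1) reduces quickly to the Galois-theoretic characterization of Theorem~\ref{thm-finalg}; for part~(2), I would combine an explicit witness with a cardinality argument to obtain the two proper inclusions.

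For part~(1), fix $\alpha=(\alpha_p)_p\in\mathcal{P}_{\A}^0$ and invoke Theorem~\ref{thm-finalg} to produce a finite Galois extension $L/\mathbb{Q}$ and a map $\phi\colon\Gal(L/\mathbb{Q})\to L$ such that $\alpha_p\equiv\phi(\mathfrak{F}_p)\bmod p$ for almost all $p$ (after fixing, for each unramified $p$, a prime of $L$ above it). Since $\Gal(L/\mathbb{Q})$ is finite, the image $\phi(\Gal(L/\mathbb{Q}))\subset L$ is a finite set of algebraic numbers, so I would take $f(x)\in\mathbb{Q}[x]$ to be the squarefree monic polynomial vanishing on $\phi(\Gal(L/\mathbb{Q}))$, equivalently the product of the distinct $\mathbb{Q}$-minimal polynomials of its elements. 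By construction $f\neq 0$ in $\mathbb{Q}[x]$ and $f(\phi(\sigma))=0$ in $L$ for every $\sigma\in\Gal(L/\mathbb{Q})$. Excluding the finite set of primes dividing the denominators appearing in $f$ or in the values of $\phi$, the reduction $f(\alpha_p)\equiv f(\phi(\mathfrak{F}_p))\equiv 0\bmod p$ is well-defined and trivially zero for almost all $p$, which is exactly the statement $f(\alpha)=0$ in $\A$.

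For part~(2), the inclusion $\Qi\subset\A$ is definitional and $\mathcal{P}_{\A}^0\subseteq\Qi$ follows from part~(1). To see $\mathbb{Q}\subsetneq\mathcal{P}_{\A}^0$, I would exhibit a single explicit witness: the ordinary Fibonacci sequence $(F_n)_n$ is recurrent, so $(F_p\bmod p)_p\in\mathcal{P}_{\A}^0$, and the classical congruence $F_p\equiv\lsym{p}\bmod p$ together with Dirichlet's theorem shows this element takes each of the values $\pm 1$ on infinite sets of primes, hence cannot come from $\mathbb{Q}$. For $\mathcal{P}_{\A}^0\subsetneq\Qi$, I would appeal to a cardinality argument: a recurrent sequence over $\mathbb{Q}$ is determined by finitely many rationals (its recurrence coefficients and its initial values), so there are only countably many such sequences, hence $\mathcal{P}_{\A}^0$ is countable; on the other hand, the subset $\{\alpha\in\A\mid\alpha^2=1\}\subset\Qi$ is in bijection with $\{\pm 1\}^{P}/\bigoplus_{P}\{\pm 1\}$, which is uncountable. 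Thus $\Qi\setminus\mathcal{P}_{\A}^0\neq\emptyset$.

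The main technical point in the argument is, in part~(1), the verification that the residue $f(\phi(\mathfrak{F}_p))\bmod p$ is well-defined for almost all $p$ and that its vanishing is independent of the choice of prime of $L$ over $p$; both are automatic because $f$ has rational coefficients and $f(\phi(\mathfrak{F}_p))=0$ already holds inside $L$, not merely modulo a prime. The two proper inclusions in part~(2) are short, though the non-constructive nature of the argument $\mathcal{P}_{\A}^0\subsetneq\Qi$ means we do not obtain an explicit element of $\Qi\setminus\mathcal{P}_{\A}^0$ --- producing one directly appears significantly harder but is not required here.
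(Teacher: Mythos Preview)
The paper does not supply its own proof of this theorem: it is quoted as \cite[Theorem~1.4]{Rosen1}, and the only supporting content in the paper is the remark immediately following it that $\mathcal{P}_{\A}^0$ is countable and $\Qi$ is uncountable (also attributed to \cite{Rosen1}). There is therefore nothing in the paper to compare your argument against line by line.

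That said, your argument is correct and is essentially the expected one. For part~(1), extracting $f$ as the product of the minimal polynomials of the finitely many values $\phi(\sigma)\in L$ via Theorem~\ref{thm-finalg} is the natural move, and your observation that $f(\phi(\mathfrak{F}_p))=0$ already in $L$ (not merely modulo a prime) correctly handles the well-definedness and the independence of the choice of prime above $p$. For part~(2), your witnesses are fine: the Fibonacci element $(F_p\bmod p)_p=\bigl(\lsym{p}\bigr)_p$ is a clean choice for $\mathbb{Q}\subsetneq\mathcal{P}_{\A}^0$, and your countable/uncountable dichotomy for $\mathcal{P}_{\A}^0\subsetneq\Qi$ is precisely the mechanism the paper's remark records. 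Your explicit uncountable subset $\{\alpha\in\A:\alpha^2=1\}\subset\Qi$ makes the argument self-contained rather than a bare citation.
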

\begin{rmk}
Note that $\mathcal{P}_\A^0$ is countable and $\Qi$ is uncountable (\cite{Rosen1}).
\end{rmk}
\begin{df}\label{df-ftn}
An element of $\A\setminus \Qi$ is called an \textit{$\A$-transcendental number}.
\end{df}
FMZV is expected to be an $\A$-transcendental number when it is non-zero, but a single example has not been found. According to the $\A$-analogue of the period conjecture proposed by Rosen (\cite{Rosen2018}), at least FMZV of depth $2$ is expected to be non-zero.
\begin{prop}\label{dm}
Let $\alpha \in \ca{A}$. If there exists a sequence of distinct integers $(a_n)_n$ such that $a_n$ occurs infinitely often in $\alpha$ for every $n\in\mathbb{N}$, then $\alpha$ is an $\A$-transcendental number.
\end{prop}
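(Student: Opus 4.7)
The plan is a direct argument by contradiction, leveraging the fact that $f(\alpha) = 0$ in $\A$ only asserts $f(\alpha_p)\equiv 0 \mod p$ for all but finitely many $p$. Suppose for contradiction that $\alpha\in\Qi$, so there exists a non-zero $f(x)\in\mathbb{Q}[x]$ with $f(\alpha)=0$. After multiplying $f$ by an appropriate positive integer, we may assume $f(x)\in\mathbb{Z}[x]$ without loss of generality, since clearing denominators only enlarges the exceptional finite set of primes that will play no role. Fix a representative $(\alpha_p)_p$ of $\alpha$ and a finite set $E\subset P$ outside of which $f(\alpha_p)\equiv 0 \mod p$ holds.

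Next, fix an arbitrary $n\in\mathbb{N}$. By hypothesis the set $P_{a_n}((\alpha_p)_p)$ is infinite, hence it meets the cofinite set $P\setminus E$ in infinitely many primes. For every such prime $p$, we have $\alpha_p\equiv a_n\mod p$ and therefore
\[
f(a_n)\equiv f(\alpha_p)\equiv 0 \mod p.
\]
Consequently the integer $f(a_n)$ is divisible by infinitely many primes, forcing $f(a_n)=0$ in $\mathbb{Z}$. Since $n\in\mathbb{N}$ was arbitrary and the $a_n$'s are pairwise distinct integers, the polynomial $f$ would have infinitely many distinct roots, contradicting $f\ne 0$.

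There is no serious obstacle in this argument. The only bookkeeping point is that elements of $\A$ are equivalence classes, so one must check that the notion of a value ``occurring infinitely often'' does not depend on the choice of representative $(\alpha_p)_p$ — and it does not, because modifying finitely many coordinates changes each $P_n((\alpha_p)_p)$ only by a finite set. Thus the whole argument reduces the $\A$-transcendence of $\alpha$ to the elementary fact that a non-zero one-variable polynomial over $\mathbb{Q}$ has only finitely many roots.
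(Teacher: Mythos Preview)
Your proof is correct and follows essentially the same approach as the paper: both argue by contradiction, take a non-zero $f$ annihilating $\alpha$, and exploit that $f$ has only finitely many rational roots against the infinitely many distinct $a_n$. The only cosmetic difference is the order of quantifiers: the paper first picks some $n$ with $f(a_n)\neq 0$ (possible since $f$ has finitely many roots) and then uses infinite occurrence of $a_n$ to contradict $f(\alpha)=0$, whereas you show $f(a_n)=0$ for every $n$ and then invoke distinctness; these are logically interchangeable.
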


\begin{proof}

Suppose that $\alpha\in \mathcal{C}_{\A}^0$.
By the definition of $\Qi$, we can take $f(x)\in \mathbb{Q}[x]\setminus\{0\}$ with $f(\alpha)=0$ as an element in $\A$.
Since the components of $(a_n)_n$ are distinct, 
we can take $n\in\mathbb{N}$ such that
\[
f(a_n)\neq 0
\]
in $\mathbb{Q}$. This implies that 
\[
f(a_n)\not\equiv 0 \mod p
\]
for sufficiently every large $p$. Since $a_n$ occurs infinitely often in $\alpha$ for every $n\in\mathbb{N}$,
\[
f(\alpha)\neq 0
\]
holds on $\A$. It contradicts that $f(\alpha)=0$ in $\A$.

\end{proof}

\begin{ex}
Let $\alpha\in \A$ be represented by
$(\overbrace{1}^1,\overbrace{1,2}^2,\overbrace{1,2,3}^3,\ldots )$.
Then every $m\in\mathbb{Z}_{>0}$ occurs infinitely often in $\alpha$. By Proposition \ref{dm}, this $\alpha$ is an $\A$-transcendental number.
Hence $\mathcal{C}_{\A}^0\subsetneq \A$ holds.
\end{ex}


\subsection{The $\A$-transcendence of the $q$-Fibonacci sequence}\verb| |

This subsection gives the proof of Theorem \ref{thm-trans} (Theorem \ref{thm-trans2}).

By Theorem \ref{thm-q-fib}, the $q$-Fibonacci sequence is related with the residual indices.
Several results on the residual indices are obtained under the following conjecture called the generalized Riemann hypothesis (GRH, in short).

\begin{conj}\label{Riemann}
The real part of every non-trivial zero of the Dedekind zeta function of an algebraic field $K$ is equal to $\displaystyle \frac{1}{2}$.
\end{conj}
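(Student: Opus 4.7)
The final statement, Conjecture \ref{Riemann}, is the Generalized Riemann Hypothesis for Dedekind zeta functions of number fields, one of the central open problems of mathematics and, in its classical case $K=\mathbb{Q}$, a Clay Millennium Problem. I must be up-front that no realistic proof sketch can be offered here: the paper states it as a conjecture precisely because it is invoked only as a hypothesis in downstream results, notably via Moree's conditional theorem on the density of primes with prescribed residual index, which underlies the $\A$-transcendence argument of Theorem \ref{thm-trans}. My plan is therefore only to outline the principal strategies the literature has pursued toward such a proof and to indicate why each has so far stalled.

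The first approach is spectral, following Hilbert and P\'olya: one seeks a self-adjoint operator on some Hilbert space whose spectrum consists of the imaginary parts of the non-trivial zeros of $\zeta_K$, whence GRH would follow from self-adjointness. Random-matrix heuristics in the spirit of Montgomery, Odlyzko and Katz--Sarnak match the local statistics of zeros with eigenvalue statistics of random unitary matrices, and Connes' noncommutative-geometry framework reformulates GRH as a positivity property for a trace on the adele class space of $K$. The obstruction is that no one has been able to exhibit such an operator canonically attached to $K$ together with a proof of its self-adjointness on a suitable Hilbert space.

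A second approach is geometric, motivated by the function-field analogue proven by Weil (for curves) and Deligne (in higher dimension) via $\ell$-adic cohomology and the Frobenius action. To transport this to $\mathrm{Spec}\,\mathcal{O}_K$ one would need a cohomology theory for arithmetic schemes carrying a Frobenius-like endomorphism whose eigenvalues recover the zeros of $\zeta_K$; no such theory is known, and candidate substitutes such as Deninger's conjectural dynamical systems or Arakelov-theoretic constructions have not yielded a proof. A third, automorphic, route factors $\zeta_K$ into standard $L$-functions on $\mathrm{GL}_n$ over $\mathbb{Q}$ via Langlands functoriality and would deduce GRH for $\zeta_K$ from the analogous hypothesis for those $L$-functions, which is no easier than the original problem.

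The main obstacle, recurring across all of these programs, is that we possess no mechanism for ruling out a hypothetical zero off the critical line on purely analytic grounds, and no way to transport the function-field geometry of Frobenius into a genuine arithmetic setting over $\mathbb{Z}$. For this reason the present paper, following the conditional literature that grew up around Artin's primitive root conjecture (Hooley, Moree), makes no attempt at a proof and invokes Conjecture \ref{Riemann} only as a working hypothesis for Theorem \ref{thm-trans}.
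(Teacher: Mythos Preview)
Your assessment is correct and aligns with the paper's treatment: Conjecture \ref{Riemann} is stated as an unproven hypothesis, and the paper offers no proof of it, invoking it only as an assumption (for the fields $K_{s,r}^g$) underlying Lemma \ref{Lem-Moree-detail} and hence Theorem \ref{thm-trans}. There is nothing to compare against, and your explanation of why no proof is available is appropriate.
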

\begin{rmk}
In this paper, we assume the GRH for all fields $K_{s,r}^g=\mathbb{Q}(\zeta_s, g^{\frac{1}{r}})$, where $g$ is a positive square-free integer,  $s$ and $r$ are integers satisfying $r|s$ and $\zeta_u$ is a $u$-th root of the unity.
\end{rmk}

Under GRH, Hooley (\cite{Hooley}) calculated the density of primes $p$ satisfying $I_p(\alpha)=1$ for a square-free positive integer $\alpha$.
Under GRH, Lenstra (\cite{Lenstra}) calculated the density of primes satisfying $I_p(\alpha)=k$ for a square-free integer $\alpha$ and a positive integer $k$. 
He also calculated the condition that the density of primes satisfying $I_p(\alpha)=k$ is equal to $0$.
Murata (\cite{Murata}) showed the asymptotic formulae on such primes less than a given positive real number $x$ for a given square-free integer $\alpha$.


We prepare some notations to prove Theorem \ref{thm-trans2} (Theorem \ref{thm-trans}).
Let $v$ and $s\in\mathbb{Z}_{>0}$.
Let $\zeta_s$ be a root of unity and $\sigma_b:\mathbb{Q}(\zeta_s)\rightarrow \mathbb{Q}(\zeta_s),\;\zeta_s\mapsto \zeta_s^b$ for $b\in\mathbb{Z}$ satisfying $(s,b)=1$.
\begin{itemize}
\item Let $[a,b]$ be the least common multiple of $a$ and $b$ and $(a,b)$ be the greatest common divisor of $a$ and $b$.
\item $C_g(b,f,v)=
\begin{cases}
1&\text{if }\sigma_b|_{\mathbb{Q}(\zeta_f)\cap K_{v,v}^g}=\id\\
0&\text{otherwise}.
\end{cases}$
\item Let $\mu:\mathbb{Z}_{>0}\rightarrow\{0,\pm1\}$ be the M\"{o}bius function, i.e.
\[
\mu(n):=
\begin{cases}
0& \text{if $n$ has a squared prime factor}\\
(-1)^{k}& \text{if $n$ is the product of $k$ distinct primes}.
\end{cases}
\]
\end{itemize}
Moree showed the following lemma under GRH.

\begin{lem}[{\cite[Lemma 11]{MoreeII}}]\label{Lem-Moree-detail}

Let $a$, $d$, $t\in\mathbb{Z}_{>0}$, let $g\in\mathbb{Z}\setminus\{0,\pm1\}$ be square-free and $x\in\mathbb{R}_{>0}$.
Put
\[
V_g(a,d;t)(x):=|\{p\le x\mid p\in P,\;I_p(g)=t, p\equiv 1+ta \mod dt\}|.
\]
For sufficiently large $x$,
\[
V_g(a,d;t)(x)=\frac{x}{\log x}\delta(a,d;t)
+O_{g,d}\left(\frac{x\log \log x}{\varphi(t)\log^2 x}+\frac{x}{\log ^2x}\right)
\]
holds under GRH, where 
\[ \delta(a,d;t):=\sum_{\substack{n=1\\ (n,d)|a}}^\infty \frac{\mu(n)C_g(1+ta;dt;nt)}{[K_{[d,n]t,nt}^g:\mathbb{Q}]},
\]
 $\varphi$ is Euler's totient function and $O_{g,d}$ is the Landau notation with respect to $g$ and $d$.
\end{lem}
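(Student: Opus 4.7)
The plan is to follow the classical Hooley–Lenstra–Moree strategy for counting primes with a prescribed residual index, adapted to include the arithmetic-progression constraint. The first step is to rephrase the condition $I_p(g)=t$ in field-theoretic language. We have $I_p(g)=t$ precisely when $g$ is a $t$-th power residue modulo $p$ but not a $qt$-th power for any prime $q\mid (p-1)/t$. Equivalently, using the Kummer–Dedekind correspondence, $g$ is an $m$-th power residue mod $p$ if and only if $p$ splits completely in the Kummerian field $K_{m,m}^g=\mathbb{Q}(\zeta_m,g^{1/m})$. Together with the congruence $p\equiv 1+ta\bmod dt$, which amounts to a prescribed Frobenius class in $\mathbb{Q}(\zeta_{dt})$, the whole condition becomes a statement about the Frobenius of $p$ in a suitable compositum.

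Second, I would encode the ``not a higher power'' part via Möbius inversion: writing
\[
\mathbf{1}[I_p(g)=t]=\sum_{n=1}^{\infty}\mu(n)\,\mathbf{1}[\,g\text{ is an }nt\text{-th power mod }p,\ t\mid p-1\,],
\]
so that
\[
V_g(a,d;t)(x)=\sum_{n=1}^{\infty}\mu(n)\,\pi_{g}(x;\,a,d,nt),
\]
where $\pi_{g}(x;a,d,m)$ counts $p\le x$ with $p\equiv 1+ta\bmod dt$ and $p$ splitting completely in $K_{m,m}^g$. By the Chebotarev density theorem applied to the compositum $\mathbb{Q}(\zeta_{[d,n]t})\cdot K_{nt,nt}^g$, such primes are controlled by the Frobenius of $p$ in the composite field; compatibility of the arithmetic progression with the splitting condition is exactly measured by the indicator $C_g(1+ta,dt,nt)$ (it is nonzero precisely when $\sigma_{1+ta}$ restricted to $\mathbb{Q}(\zeta_{dt})\cap K_{nt,nt}^g$ is the identity, which is necessary for the required Frobenius class to be nonempty), while the denominator $[K_{[d,n]t,nt}^g:\mathbb{Q}]$ is the degree of the compositum.

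Third, under GRH one invokes the effective version of Chebotarev (Lagarias–Odlyzko): for a number field $L/\mathbb{Q}$ of degree $n_L$ and absolute discriminant $d_L$,
\[
\pi_{\,\mathrm{Cheb}}(x;L)=\frac{x}{\log x}\cdot\frac{1}{n_L}+O\!\left(\sqrt{x}\bigl(\log d_L+n_L\log x\bigr)\right).
\]
Plugging in $L=\mathbb{Q}(\zeta_{[d,n]t})\cdot K_{nt,nt}^g$ produces the leading term $\mu(n)C_g(1+ta,dt,nt)/[K_{[d,n]t,nt}^g:\mathbb{Q}]$ in the main term $\delta(a,d;t)\,x/\log x$, together with a controlled error. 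The hypothesis $(n,d)\mid a$ appears naturally: otherwise the arithmetic-progression condition is incompatible with $p\equiv 1\bmod nt$ and the contribution vanishes.

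The main obstacle will be the usual one in this circle of problems: the sum over $n$ is infinite, so one must truncate it at some level $\xi=\xi(x)$ and bound the tail. This requires (i) uniform degree estimates $[K_{[d,n]t,nt}^g:\mathbb{Q}]\gg \varphi(nt)\cdot nt/(\text{small obstructions from }g)$, so that the tail $\sum_{n>\xi}\mu(n)/[K\colon\mathbb{Q}]$ is small, and (ii) balancing the Chebotarev error, whose discriminant contribution grows with $n$, against the tail bound. One chooses $\xi$ so that the error from truncation ($\ll x/(\varphi(t)\xi\log x)$ after a zeta-function manipulation) and the accumulated Chebotarev errors combine to give the stated $O_{g,d}\bigl(x\log\log x/(\varphi(t)\log^2x)+x/\log^2 x\bigr)$; the $\log\log x$ arises from a standard Mertens-type estimate when summing $1/\varphi(nt)$ over the truncation range. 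This error analysis, originally pushed through by Hooley and refined by Lenstra and Moree, is where essentially all the work lies.
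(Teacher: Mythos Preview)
The paper does not prove this lemma at all: it is quoted verbatim as \cite[Lemma~11]{MoreeII} and used as a black box in the proof of Proposition~\ref{prop-Moree}. So there is no ``paper's own proof'' to compare against.

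That said, your outline is the correct strategy and is essentially how Moree (following Hooley and Lenstra) establishes results of this type: rewrite $I_p(g)=t$ via inclusion--exclusion over $n$ as a splitting condition in the Kummer fields $K^g_{nt,nt}$, combine with the congruence condition via Chebotarev in the compositum $K^g_{[d,n]t,nt}$, apply the effective Chebotarev theorem under GRH, and control the tail of the M\"obius sum by truncation and degree lower bounds. Your identification of where the constraint $(n,d)\mid a$ and the indicator $C_g$ enter is accurate, and the shape of the error term (including the $\log\log x$ from summing $1/\varphi(nt)$) matches the literature. If you intend to actually supply a proof here rather than cite Moree, the genuinely delicate step you have only gestured at is the uniform truncation: one must show that primes $p\le x$ with a prime $q>\xi$ dividing $(p-1)/t$ and $g$ a $qt$-th power residue are negligible, which requires a Brun--Titchmarsh or sieve estimate in addition to Chebotarev; simply bounding the tail of $\sum_{n>\xi}1/[K^g_{[d,n]t,nt}:\mathbb{Q}]$ is not enough, since the M\"obius sum is not absolutely convergent fast enough on its own to absorb the Chebotarev errors.
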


\begin{prop}\label{prop-Moree}
Under GRH for all such fields $K_{s,r}^g$, if $t\in (5\mathbb{Z}_{>0}+1)\cap P$, $a\in\mathbb{Z}_{>0}$ and $g$ is a square-free integer, then there are infinitely many primes $p$ satisfying $I_p(g)=t$ and  $p\equiv 1+t \mod 5t$.
\end{prop}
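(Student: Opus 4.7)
The plan is to apply Lemma \ref{Lem-Moree-detail} with the parameters $a = 1$, $d = 5$ and the given prime $t \equiv 1 \pmod 5$. Under GRH for the fields $K_{s,r}^g$, the lemma yields
\[
V_g(1, 5; t)(x) = \frac{x}{\log x}\,\delta(1, 5; t) + O_{g, 5}\!\left(\frac{x \log\log x}{\varphi(t) \log^2 x} + \frac{x}{\log^2 x}\right)
\]
for all sufficiently large $x$. By definition, $V_g(1, 5; t)(x)$ counts precisely the primes $p \leq x$ with $I_p(g) = t$ and $p \equiv 1 + t \pmod{5t}$, which is what the proposition asserts to be unbounded; since the error term is $o(x/\log x)$, it suffices to prove the single inequality $\delta(1, 5; t) > 0$.

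To establish this positivity I would first isolate the $n = 1$ summand of
\[
\delta(1, 5; t) = \sum_{\substack{n \geq 1 \\ (n,\,5) = 1}} \frac{\mu(n)\, C_g(1 + t,\, 5t,\, nt)}{[K_{5nt,\, nt}^g : \mathbb{Q}]}.
\]
Since $1 + t \equiv 1 \pmod t$, the automorphism $\sigma_{1+t}$ restricts to the identity on $\mathbb{Q}(\zeta_t)$, while on $\mathbb{Q}(\zeta_5)$ it acts by $\zeta_5 \mapsto \zeta_5^{2}$. A standard Kummer-theoretic computation shows that for $g$ square-free with $\gcd(g,\,t) = 1$ one has $\mathbb{Q}(\zeta_{5t}) \cap K_{t,t}^g = \mathbb{Q}(\zeta_t)$, so $C_g(1+t, 5t, t) = 1$ and this term contributes the strictly positive quantity $1/[K_{5t, t}^g : \mathbb{Q}]$. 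For the remaining square-free indices $n > 1$, I would exploit the near-multiplicativity of $[K_{5nt,\, nt}^g : \mathbb{Q}]$ over primes $\ell \neq 5$: after factoring out the $n = 1$ main term, the tail rearranges into an Euler-type product whose generic local factor has the shape $1 - c_\ell/[K_{5\ell t,\,\ell t}^g : K_{5t, t}^g]$ with $c_\ell \in \{0, 1\}$. Each factor is positive and tends to $1$ as $\ell \to \infty$, so the product converges to a strictly positive constant.

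The main obstacle will be the bookkeeping of Kummer entanglements: for the small primes $\ell$ dividing $2g$, the intersection $\mathbb{Q}(\zeta_{5\ell t}) \cap K_{\ell t,\,\ell t}^g$ can strictly contain $\mathbb{Q}(\zeta_{\ell t})$, and the relative degree used in the generic local factor must be corrected accordingly. These exceptional primes are finite in number and can be handled individually, along the lines of Moree's residual-index computations in \cite{MoreeII}; since each corrected local factor still represents the local density of primes in a nonempty Chebotarev class, it is positive, and the overall positivity of $\delta(1, 5; t)$ survives.
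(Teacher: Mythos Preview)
Your reduction to $\delta(1,5;t)>0$ via Lemma~\ref{Lem-Moree-detail} is exactly what the paper does. From that point on the two arguments diverge.

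You propose the classical Lenstra--Hooley route: isolate the $n=1$ term, rewrite the tail as an Euler product over primes $\ell\ne 5$, and check positivity of each local factor, treating the finitely many entangled primes $\ell\mid 2g$ separately. This is a legitimate strategy, but as written it remains an outline: the ``near-multiplicativity'' of $[K^g_{5nt,nt}:\mathbb Q]$ and the positivity of the exceptional local factors are asserted rather than proved, and the appeal to ``a nonempty Chebotarev class'' is heuristic. (Also, the extra hypothesis $\gcd(g,t)=1$ is unnecessary: since $t$ is an odd prime and $g$ is square-free, $[K^g_{t,t}:\mathbb Q(\zeta_t)]=t$ is coprime to $[\mathbb Q(\zeta_{5t}):\mathbb Q(\zeta_t)]=4$, so $\mathbb Q(\zeta_{5t})\cap K^g_{t,t}=\mathbb Q(\zeta_t)$ regardless.)

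The paper bypasses the Euler product entirely. It expands $\delta(1,5;t)$ as an alternating sum over square-free $n=p_1\cdots p_k$ (all $p_i\ne 5$), pairs the level-$k$ terms ($k$ even) against the level-$(k+1)$ terms, and factors:
\[
\delta(1,5;t)=\sum_{k\text{ even}}\ \sum_{\substack{p_1<\cdots<p_k\\ p_i\ne 5}}\frac{C_g(1+t,5t,p_1\cdots p_k t)}{[K^g_{5p_1\cdots p_k t,\,p_1\cdots p_k t}:\mathbb Q]}\Bigl(1-\sum_{\substack{p_{k+1}>p_k\\ p_{k+1}\ne 5}}\frac{C_g(1+t,5t,p_1\cdots p_{k+1}t)}{[K^g_{5p_1\cdots p_{k+1}t,\,p_1\cdots p_{k+1}t}:K^g_{5p_1\cdots p_k t,\,p_1\cdots p_k t}]}\Bigr),
\]
using the monotonicity $C_g(\cdot,vt)=0\Rightarrow C_g(\cdot,pvt)=0$. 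The elementary degree bound of Lemma~\ref{lem-dim} gives that the relative degree in the inner denominator is at least $(p_{k+1}-1)^2/2$, so the inner sum is dominated by $2\sum_{l\ge 1}1/(2l)^2=\pi^2/12<1$ (with a sharper estimate for $k=0$). Hence every parenthetical factor lies in $(0,1)$, and the $k=0$ outer coefficient is $1/[K^g_{5t,t}:\mathbb Q]>0$, forcing $\delta(1,5;t)>0$.

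The paper's argument is shorter and needs no case-by-case entanglement analysis; what it sacrifices is any quantitative handle on $\delta$, which your Euler product, once carried out, would provide.
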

We set up the following auxiliary lemmas to prove the above proposition.
\begin{lem}[{\cite[Equation (12)]{Hooley}}]
Let $g$ be a square-free integer. Let $s$ and $r$ be positive integers satisfying $r|s$.
Then we have
\begin{align}
[K_{s,r}^g:\mathbb{Q}]=\frac{r\varphi(s)}{\varepsilon_g(s)},
\label{eq-deg}
\end{align}
where
\[
\varepsilon_g(s):=
\begin{cases}
2&\text{if $2g|s$ and $g\equiv 1 \mod 4$}\\
1&\text{otherwise}.
\end{cases}
\] 
\end{lem}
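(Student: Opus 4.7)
The plan is to compute $[K_{s,r}^g : \mathbb{Q}]$ via the tower
\[
\mathbb{Q} \subset \mathbb{Q}(\zeta_s) \subset K_{s,r}^g = \mathbb{Q}(\zeta_s)(g^{1/r}),
\]
using the classical $[\mathbb{Q}(\zeta_s):\mathbb{Q}] = \varphi(s)$ at the bottom and reducing the problem to evaluating $[\mathbb{Q}(\zeta_s)(g^{1/r}) : \mathbb{Q}(\zeta_s)]$, which one wants to show equals $r/\varepsilon_g(s)$.

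The first key observation is that, because $r \mid s$, the field $\mathbb{Q}(\zeta_s)$ already contains a primitive $r$th root of unity, so the top layer is a Kummer extension. By Kummer theory its degree is $r/d$, where $d$ is the largest divisor of $r$ satisfying $g \in (\mathbb{Q}(\zeta_s)^\times)^d$. Thus the goal becomes: show that $d = \varepsilon_g(s)$.

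Next I would argue that for a square-free $g > 1$ one always has $d \in \{1,2\}$. If $g = c^k$ with $c \in \mathbb{Q}(\zeta_s)$ and $k \geq 3$, then fixing a positive real $k$th root $g^{1/k}$ produces a subfield $\mathbb{Q}(g^{1/k}) \subseteq \mathbb{Q}(\zeta_s)$, necessarily Galois over $\mathbb{Q}$ because $\mathbb{Q}(\zeta_s)/\mathbb{Q}$ is abelian. However $\mathbb{Q}(g^{1/k})/\mathbb{Q}$ is a real field that does not contain any nontrivial $k$th root of unity for $k \geq 3$, so it cannot be Galois unless $g$ is a perfect $k$th power in $\mathbb{Q}$, which is ruled out by square-freeness. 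Hence $d \leq 2$, with $d = 2$ exactly when $\sqrt{g} \in \mathbb{Q}(\zeta_s)$ and $2 \mid r$.

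Finally I would translate the condition $\sqrt{g} \in \mathbb{Q}(\zeta_s)$ into the arithmetic condition in the statement by invoking Kronecker--Weber and the conductor--discriminant formula: $\mathbb{Q}(\sqrt{g}) \subseteq \mathbb{Q}(\zeta_s)$ holds iff the conductor of $\mathbb{Q}(\sqrt{g})$ divides $s$ (modulo the identification $\mathbb{Q}(\zeta_n)=\mathbb{Q}(\zeta_{2n})$ for odd $n$), and for a square-free $g > 1$ this conductor equals $|g|$ when $g \equiv 1 \pmod 4$ and $4|g|$ otherwise. Matching these cases against the divisibility conditions inherited from the ambient setup (in particular $r \mid s$) yields the compact form $\varepsilon_g(s) = 2$ iff $g \equiv 1 \pmod 4$ and $2g \mid s$, and $\varepsilon_g(s) = 1$ otherwise. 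Multiplying by $\varphi(s)$ gives the formula.

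The main obstacle is the last step: a careful case analysis by the parity of $g$ and $s$ is required to reconcile the natural conductor condition with the uniform statement ``$2g \mid s$ and $g \equiv 1 \pmod 4$'', and to confirm that in the precise regime in which this lemma is applied (via Moree's formula), the assumption $2 \mid r$ that is needed for the Kummer reduction to take effect is automatic.
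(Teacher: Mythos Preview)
The paper does not prove this lemma at all: it is stated with a citation to Hooley and then used as a black box in Lemma~3.13 and in the proof of Proposition~3.11. So there is no ``paper's own proof'' to compare against; your outline is an independent argument.

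Your approach is the standard one and is essentially correct. Passing to the tower $\mathbb{Q}\subset\mathbb{Q}(\zeta_s)\subset K_{s,r}^g$, invoking Kummer theory over $\mathbb{Q}(\zeta_s)$ (which contains $\zeta_r$ since $r\mid s$), and then identifying the largest $d\mid r$ with $g\in(\mathbb{Q}(\zeta_s)^\times)^d$ via the conductor of $\mathbb{Q}(\sqrt{g})$ is exactly how one derives this formula. Your reason for $d\le 2$ is fine, and the conductor computation for square-free $g>1$ is the right tool for deciding when $\sqrt{g}\in\mathbb{Q}(\zeta_s)$.

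Your closing remark is more than an ``obstacle'': it actually pinpoints an inaccuracy in the lemma \emph{as stated} in the paper. In Hooley's original formula one has $r=s$, so the hypothesis $2g\mid s$ automatically forces $2\mid r$, and then your Kummer argument gives $d=2$ as required. In the paper's generalization to arbitrary $r\mid s$, however, one can have $2g\mid s$ and $g\equiv 1\pmod 4$ while $r$ is odd (e.g.\ $g=5$, $s=10$, $r=5$), and then $d=1$, so $[K_{s,r}^g:\mathbb{Q}]=r\varphi(s)$ rather than $r\varphi(s)/2$. Thus the formula is literally false in that regime; the correct statement needs ``$2\mid r$'' (or equivalently, one should write $\varepsilon_g(s,r)$ depending on $r$ as well). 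This is not a gap in \emph{your} argument --- you diagnosed it correctly --- but it does mean that one cannot simply ``match cases'' to recover the stated $\varepsilon_g(s)$ without that extra parity hypothesis on $r$. For the paper's purposes the formula is only ever used inside ratios and crude lower bounds, so the slip is harmless there, but a clean proof of the lemma exactly as written is not possible.
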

\begin{lem}\label{lem-dim}
Let $a\ge 2$, $b\ge 2$ and $p$ be an odd prime. Let $g\in\mathbb{Z}\setminus\{0,\pm1\}$ be a square-free  integer.
\begin{enumerate}
\item\label{lem-dim1} 
If $b|a$,  then we have $\displaystyle [\mathbb{Q}(\zeta_{ap}, g^{\frac{1}{b}}):\mathbb{Q}(\zeta_a,g^{\frac{1}{b}})]\ge \frac{p-1}{2}$.
\item\label{lem-dim2} 
If $bp|a$, then $[\mathbb{Q}(\zeta_{a},g^{\frac{1}{bp}}):\mathbb{Q}(\zeta_{a},g^{\frac{1}{b}})]=p$ holds.
\end{enumerate}
\end{lem}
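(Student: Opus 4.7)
The plan is to apply the degree formula from the preceding lemma, $[K_{s,r}^g:\mathbb{Q}] = r\varphi(s)/\varepsilon_g(s)$, to each of the four fields involved. The divisibility hypotheses ensure the condition $r\mid s$ required by that formula is satisfied in every case: in (\ref{lem-dim1}), $b\mid a$ automatically gives $b\mid ap$, and in (\ref{lem-dim2}), $bp\mid a$ trivially implies $b\mid a$.

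For part (\ref{lem-dim2}), a one-line computation suffices. Since the two fields $K_{a,bp}^g$ and $K_{a,b}^g$ share the same cyclotomic component $\mathbb{Q}(\zeta_a)$, the factor $\varepsilon_g(a)$ cancels in the quotient, yielding
\[
[K_{a,bp}^g:K_{a,b}^g] = \frac{bp\,\varphi(a)/\varepsilon_g(a)}{b\,\varphi(a)/\varepsilon_g(a)} = p.
\]

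For part (\ref{lem-dim1}), applying (\ref{eq-deg}) to numerator and denominator gives
\[
[K_{ap,b}^g:K_{a,b}^g] = \frac{\varphi(ap)}{\varphi(a)}\cdot\frac{\varepsilon_g(a)}{\varepsilon_g(ap)}.
\]
I would then bound each factor separately. The first equals $p$ if $p\mid a$ and $p-1$ otherwise, hence is always at least $p-1$. The second factor lies in $\{1/2,1,2\}$, since $\varepsilon_g$ takes values in $\{1,2\}$, hence is at least $1/2$. Multiplying the two bounds gives the desired inequality $[K_{ap,b}^g:K_{a,b}^g]\ge (p-1)/2$.

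The only mildly delicate point is checking that the quantity above is indeed a positive integer in the edge case where the $\varepsilon_g$-ratio is exactly $1/2$; this is automatic because $p$ is odd (so $p-1$ is even) and the left-hand side is by definition the degree of a field extension. No serious obstacle is anticipated, since once the formula (\ref{eq-deg}) is in hand the lemma reduces to routine bookkeeping with $\varphi$ and $\varepsilon_g$.
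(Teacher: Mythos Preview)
Your proposal is correct and follows essentially the same approach as the paper: both parts are obtained by dividing two instances of the degree formula \eqref{eq-deg}, and for part~(\ref{lem-dim1}) you bound $\varphi(ap)/\varphi(a)\ge p-1$ and $\varepsilon_g(a)/\varepsilon_g(ap)\ge 1/2$ separately, while the paper first bounds the $\varepsilon_g$-ratio by $1/2$ and then splits into the cases $p\mid a$ and $p\nmid a$ for $\varphi(ap)/\varphi(a)$. (A minor remark: in fact $\varepsilon_g(a)/\varepsilon_g(ap)\in\{1/2,1\}$, since $2g\mid a$ forces $2g\mid ap$; the value $2$ cannot occur, though this does not affect your bound.)
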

\begin{proof}

(\ref{lem-dim1})
By (\ref{eq-deg}),  we have
\[
[\mathbb{Q}(\zeta_{ap}, g^{\frac{1}{b}}):\mathbb{Q}(\zeta_a,g^{\frac{1}{b}})]
=[\mathbb{Q}(\zeta_{ap}, g^{\frac{1}{b}}):\mathbb{Q}]/[\mathbb{Q}(\zeta_a,g^{\frac{1}{b}}):\mathbb{Q}]
=\frac{\frac{b\varphi(ap)}{\varepsilon_g(ap)}}{\frac{b\varphi(a)}{\varepsilon_g(a)}}
=\frac{\varphi(ap)}{\varphi(a)}\cdot\frac{\varepsilon_g(a)}{\varepsilon_g(ap)}
\ge\frac{\varphi(ap)}{2\varphi(a)}.
\]
\begin{itemize}
\item
Suppose $(a,p)=1$. Since $\displaystyle \frac{\varphi(ap)}{\varphi(a)}=\varphi(p)=p-1$, we have $\displaystyle [\mathbb{Q}(\zeta_{ap}, g^{\frac{1}{b}}):\mathbb{Q}(\zeta_a,g^{\frac{1}{b}})]\ge \frac{p-1}{2}$.

\item
Suppose $(a,p)=p$ and take  $a=cp^k$ $(c\in\mathbb{Z}_{>0})$. Since $\displaystyle \frac{\varphi(c)\varphi(p^{k+1})}{\varphi(c)\varphi(p^k)}=p$, we have $\displaystyle [\mathbb{Q}(\zeta_{ap}, g^{\frac{1}{b}}):\mathbb{Q}(\zeta_a,g^{\frac{1}{b}})]=\frac{p}{2}\ge \frac{p-1}{2}$.
\end{itemize}

(\ref{lem-dim2})
The equation (\ref{eq-deg}) shows
\[
[\mathbb{Q}(\zeta_a,g^{\frac{1}{bp}}):\mathbb{Q}(\zeta_a,g^{\frac{1}{b}})]
=[\mathbb{Q}(\zeta_a,g^{\frac{1}{bp}}):\mathbb{Q}(\zeta_a)]/[\mathbb{Q}(\zeta_{a},g^{\frac{1}{b}}):\mathbb{Q}(\zeta_a)]
=\frac{bp}{\varepsilon_g(a)}\cdot \frac{\varepsilon_g(a)}{b}
=p.
\]

\end{proof}
\begin{proof}[Proof of Proposition \ref{prop-Moree}]
By Lemma \ref{Lem-Moree-detail}, it is sufficient to show $\delta(1,5;t)>0$.

If we consider the prime factorization of $n$ and that $(n, 5) | 1$ and $(n, 5) = 1$ are equivalent, then $\delta(1, 5; t)$ can be written as follows:
\begin{align*}
\delta(1,5;t)&=\sum_{k \ge 0}(-1)^k \sum_{\substack{p_1<\cdots<p_t\\ p_i\neq 5}}\frac{C_g(1+t,5t,p_1\cdots p_k t)}{[\mathbb{Q}(\zeta_{5p_1\cdots p_k t},g^{\frac{1}{p_1\cdots p_{k}t}}):\mathbb{Q}]}\\
&=\sum_{k\in 2\mathbb{Z}_{\ge 0}}\left(\sum_{\substack{p_1<\cdots<p_k\\ p_i\neq 5}}\frac{C_g(1+t,5t,p_1\cdots p_k t)}{[\mathbb{Q}(\zeta_{5p_1\cdots p_k t},g^{\frac{1}{p_1\cdots p_{k}t}}):\mathbb{Q}]}
-\sum_{\substack{p_1<\cdots<p_{k+1}\\ p_i\neq 5}}\frac{C_g(1+t,5t,p_1\cdots p_{k+1} t)}{[\mathbb{Q}(\zeta_{5p_1\cdots p_{k+1} t},g^{\frac{1}{p_1\cdots p_{k+1}t}}):\mathbb{Q}]}\right).
\end{align*}
Note that if $\sigma_b|_{\mathbb{Q}(\zeta_f)\cap K_{v,v}}\neq \id$, then $\sigma_b|_{\mathbb{Q}(\zeta_f)\cap K_{pv,pv}}\neq \id$ for $b$, $f$ and $v\in\mathbb{Z}_{>0}$ and a prime number $p$. 
This implies if $C_g(1+t,5t,p_1\cdots p_k t)=0$, then $C_g(1+t,5t,p_1\cdots p_{k+1} t)=0$ holds.
Moreover, $C_g(1+t,5t,p_1\cdots p_k t)C_g(1+t,5t,p_1\cdots p_{k+1} t)=C_g(1+t,5t,p_1\cdots p_{k+1} t)$ holds.
Thus we have
\begin{align*}
\delta(1,5;t)&\\
=&\sum_{k\in 2\mathbb{Z}_{\ge 0}}\left(\sum_{\substack{p_1<\cdots<p_k\\ p_i\neq 5}}\frac{C_g(1+t,5t,p_1\cdots p_k t)}{[\mathbb{Q}(\zeta_{5p_1\cdots p_k t},g^{\frac{1}{p_1\cdots p_{k}t}}):\mathbb{Q}]}\right. \\
&\left.
-\sum_{\substack{p_1<\cdots<p_{k+1}\\ p_i\neq 5}}\frac{C_g(1+t,5t,p_1\cdots p_{k+1} t)C_g(1+t,5t,p_1\cdots p_{k} t)}{[\mathbb{Q}(\zeta_{5p_1\cdots p_{k+1} t},g^{\frac{1}{p_1\cdots p_{k+1}t}}):\mathbb{Q}(\zeta_{5p_1\cdots p_{k}t},g^{\frac{1}{p_1\cdots p_k t}})][\mathbb{Q}(\zeta_{5p_1\cdots p_k t},g^{\frac{1}{p_1\cdots p_{k}t}}):\mathbb{Q}]}\right)
\\
=&\sum_{k\in 2\mathbb{Z}_{\ge 0}}\sum_{\substack{p_1<\cdots<p_k\\ p_i\neq 5}}\frac{C_g(1+t,5t,p_1\cdots p_k t)}{[\mathbb{Q}(\zeta_{5p_1\cdots p_k t},g^{\frac{1}{p_1\cdots p_{k}t}}):\mathbb{Q}]}\times \\
&\left(1
-\sum_{\substack{p_k<p_{k+1}\\ p_{k+1}\neq 5}}\frac{C_g(1+t,5t,p_1\cdots p_{k+1} t)}{[\mathbb{Q}(\zeta_{5p_1\cdots p_{k+1} t},g^{\frac{1}{p_1\cdots p_{k+1}t}}):\mathbb{Q}(\zeta_{5p_1\cdots p_{k}t},g^{\frac{1}{p_1\cdots p_k t}})]}
\right).
\end{align*}
Hence it is sufficient to show
\begin{align}
\sum_{\substack{p_k<p_{k+1}\\ p_i\neq 5}}\frac{C_g(1+t,5t,p_1\cdots p_{k+1} t)}{[\mathbb{Q}(\zeta_{5p_1\cdots p_{k+1} t},g^{\frac{1}{p_1\cdots p_{k+1}t}}):\mathbb{Q}(\zeta_{5p_1\cdots p_{k}t},g^{\frac{1}{p_1\cdots p_k t}})]}
<1
\label{eq-den}
\end{align}
for every $k\in2\mathbb{Z}_{\ge 0}$.
By Lemma \ref{lem-dim}, 
\begin{align*}
&[\mathbb{Q}(\zeta_{5p_1\cdots p_{k+1}t},g^{\frac{1}{p_1\cdots p_{k+1}t}})
:\mathbb{Q}(\zeta_{5p_1\cdots p_{k}t},g^{\frac{1}{p_1\cdots p_k t}})]\\
=&
[\mathbb{Q}(\zeta_{5p_1\cdots p_{k+1}t},g^{\frac{1}{p_1\cdots p_{k+1}t}})
:\mathbb{Q}(\zeta_{5p_1\cdots p_{k+1}t},g^{\frac{1}{p_1\cdots p_{k}t}})]
\cdot
[\mathbb{Q}(\zeta_{5p_1\cdots p_{k+1}t},g^{\frac{1}{p_1\cdots p_{k}t}})
:\mathbb{Q}(\zeta_{5p_1\cdots p_{k}t},g^{\frac{1}{p_1\cdots p_{k}t}})]\\
\ge& p_{k+1}\frac{(p_{k+1}-1)}{2}\ge \frac{(p_{k+1}-1)^2}{2}
\end{align*}
holds. Therefore,
\begin{enumerate}[(i)]
\item In the case where $k>0$,
we have
\begin{align*}
\sum_{\substack{p_k<p_{k+1}\\ p_i\neq 5}}\frac{C_g(1+t,5t,p_1\cdots p_{k+1} t)}{[\mathbb{Q}(\zeta_{5p_1\cdots p_{k+1} t},g^{\frac{1}{p_1\cdots p_{k+1}t}}):\mathbb{Q}(\zeta_{5p_1\cdots p_{k}t},g^{\frac{1}{p_1\cdots p_k t}})]}
&<2\sum_{\substack{p_k<p_{k+1} \\ p_k\neq 5}}\frac{1}{(p_{k+1}-1)^2}\\
&\le 2\sum_{l\in\mathbb{Z}_{>0}}\frac{1}{4l^2}\le \frac{\pi^2}{12}<\frac{5}{6}<1.
\end{align*}

\item In the case where $k=0$, since we have
\[
[\mathbb{Q}(\zeta_{5pt},g^{\frac{1}{pt}}):\mathbb{Q}(\zeta_{5t},g^{\frac{1}{t}})]
= \frac{\frac{pt\varphi(5pt)}{\varepsilon_g(5pt)}}{\frac{t\varphi(5t)}{\varepsilon_g(5t)}}
\ge \frac{5p\varphi(pt)}{2}
\ge \frac{5p(p-1)}{2}
\ge \frac{5(p-1)^2}{2}
\]
by (\ref{eq-deg}), we have
\begin{align*}
\sum_{\substack{p\in P\\ p\neq 5}} \frac{C_g(1+t,5t,pt)}{[\mathbb{Q}(\zeta_{5pt},g^{\frac{1}{pt}}):\mathbb{Q}(\zeta_{5t},g^{\frac{1}{t}})]}
\le&\;\frac{2}{5}\sum_{p\in P} \frac{1}{(p-1)^2}\le \frac{\pi^2}{15}<\frac{2}{3}.
\end{align*}
\end{enumerate}
So (\ref{eq-den}) is obtained. Hence we show the claim.
\end{proof}
Using the above lemma, we obtain the result on $\A$-transcendental numbers.
\begin{thm}[Theorem \ref{thm-trans}]\label{thm-trans2}
Let $g\in\mathbb{Z}_{>1}$ be a square-free integer.  
Then $F=(F_p(g))_p$ is an $\A$-transcendental number under GRH for all such fields $K_{s,r}^g$.
\end{thm}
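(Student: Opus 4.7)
The plan is to invoke Proposition \ref{dm} by exhibiting an infinite family of distinct integers, each of which appears as $F_p(g)\bmod p$ for infinitely many primes $p$. The natural candidates are Fibonacci numbers $F_{t+1}$ indexed by primes $t \equiv 1 \pmod{5}$: by Theorem \ref{thm-q-fib}, such values will arise from primes $p$ with $I_p(g)=t$ and $\ord_p(g) \equiv 1 \pmod{5}$, since then $\lsym{\ord_p(g)}=1$ and the subscript on the right-hand side collapses to $t+1$.

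First I would fix a prime $t \equiv 1 \pmod{5}$; infinitely many such $t$ exist by Dirichlet's theorem. Applying Proposition \ref{prop-Moree} to this $t$, under GRH I obtain infinitely many primes $p$ satisfying $I_p(g) = t$ and $p \equiv 1+t \pmod{5t}$. Discarding the finitely many $p$ dividing $g(g-1)$, each remaining such $p$ satisfies $\ord_p(g) = (p-1)/I_p(g) = (p-1)/t$, and the congruence $p-1 \equiv t \pmod{5t}$ forces $\ord_p(g) \equiv 1 \pmod{5}$. In particular $\ord_p(g) \not\equiv 0 \pmod{5}$ and $\lsym{\ord_p(g)} = \lsym{1} = 1$, so Theorem \ref{thm-q-fib} gives
\[
F_p(g) \equiv F_{I_p(g)+1} = F_{t+1} \pmod{p}.
\]
Thus $F_{t+1}$ occurs as the residue of $F=(F_p(g))_p$ at infinitely many primes $p$.

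Finally, letting $t$ range over all primes $\equiv 1 \pmod{5}$, the exponents $t+1$ form an infinite subset of $\mathbb{Z}_{\geq 12}$, and strict monotonicity of $(F_n)_{n \geq 2}$ ensures that the integers $F_{t+1}$ are pairwise distinct. Proposition \ref{dm} then delivers the $\A$-transcendence of $F$. The only delicate step is matching the arithmetic progression produced by Proposition \ref{prop-Moree} to the requirement $\ord_p(g)\equiv 1\pmod{5}$, so that $\lsym{\ord_p(g)}=1$ yields a single Fibonacci value $F_{t+1}$ rather than a $5$-residue-class-dependent mixture; once this alignment is secured, the $\A$-transcendence is a purely formal consequence of the two cited results together with Dirichlet's theorem.
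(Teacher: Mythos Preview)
Your proof is correct and follows the same route as the paper: for each prime $t\equiv 1\pmod 5$ invoke Proposition~\ref{prop-Moree}, feed the resulting primes into Theorem~\ref{thm-q-fib}, and finish with Proposition~\ref{dm}. The one cosmetic difference is that you push the congruence $p\equiv 1+t\pmod{5t}$ through to obtain $\ord_p(g)\equiv 1\pmod 5$ and hence pin down the value $F_{t+1}$ exactly, whereas the paper only records $\lsym{\ord_p(g)}=\pm 1$ and concludes that at least one of $F_{t-1}$, $F_{t+1}$ occurs infinitely often; either variant suffices for Proposition~\ref{dm}.
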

\begin{proof}
Let $t\in (5\mathbb{Z}_{>0}+1)\cap P$. By Proposition \ref{prop-Moree}, there are infinitely many primes $p$ which satisfy $I_p(g)=t$ and  $p\equiv 1+t \mod 5t$.
Therefore there are infinitely many primes $p$ which satisfy $I_p(g)=t$ and  $p\equiv 2 \mod 5$ by $t\equiv 1 \mod 5$.
By Theorem \ref{thm-q-fib}, at least
one of $F_{t-1}$ or $F_{t+1}$ occurs infinitely often in $F$.
That means there exists a subsequence $(a_n)$ of the Fibonacci sequence such that 
$a_n$ occurs infinitely on $F$
for every $n\in\mathbb{N}$. Hence $F\notin \Qi$ by Proposition \ref{dm}.
\end{proof}

\begin{center}
Acknowledgments
\end{center} 

This work was financially supported by JST SPRING, Grant Number JPMJSP2125. T.A. would like to take this opportunity to thank the “Interdisciplinary Frontier Next-Generation Researcher Program of the Tokai Higher Education and Research System".
We are deeply grateful to  Hidekazu Furusho; without his profound instruction and continuous encouragement, this paper would never be accomplished.
We are grateful to Henrik Bachmann, Minoru Hirose, Toshiki Matsusaka, Leo Murata, Shin-ichiro Seki, Koji Tasaka and Shuji Yamamoto for answering our questions. 
We would like to thank  Jun Ueki for giving us much advice on the structure of this paper.

\bibliographystyle{amsplain}
\bibliography{C:/Users/User/Documents/Tex/reference/transcendental}
\end{document}